\newcommand{\pa}{\partial}
\newcommand{\la}{\label}
\newcommand{\fr}{\frac}
\newcommand{\na}{\nabla}
\newcommand{\be}{\begin{equation}}
\newcommand{\ee}{\end{equation}}
\newcommand{\ba}{\begin{array}{l}}
\newcommand{\ea}{\end{array}}
\newcommand{\beg}{\begin}
\renewcommand{\l}{\Lambda}
\newcommand{\N}{\mathbb N}
\newcommand{\R}{\mathbb R}
\def\ZZ{{\mathbb Z}}
\def\RR{{\mathbb R}}
\def\TT{{\mathbb T}}
\def\NN{\mathbb N}
\theoremstyle{plain}
\newtheorem{Thm}{Theorem}[section]
\theoremstyle{definition}
\theoremstyle{remark}
\numberwithin{equation}{section}
\title[Approximations of Nonlocal Periodic Operators]{A PDE Perspective on Approximating Nonlocal Periodic Operators with Applications on Neural Networks  for Critical SQG Equations}
\author[E. Abdo]{Elie Abdo}
\address[E. Abdo]
{	Department of Mathematics \\
     University of California  \\
	Santa Barbara, CA 93106-3080, USA.} \email{elieabdo@ucsb.edu}
\author[R. Hu]{Ruimeng Hu}
\address[R. Hu]
{Department of Mathematics \\
Department of Statistics and Applied Probability \\
     University of California  \\
	Santa Barbara, CA 93106-3080, USA.} \email{rhu@ucsb.edu}
\author[Q. Lin]{Quyuan Lin}
\address[Q. Lin]
{	School of Mathematical and Statistical Sciences \\
Clemson University\\
Clemson, SC 29634, USA.} \email{quyuanl@clemson.edu}
\date{November 18, 2024}
\begin{document}
\maketitle

% REQUIRED
\begin{abstract} 
Nonlocal periodic operators in partial differential equations (PDEs) pose challenges in constructing neural network solutions, which typically lack periodic boundary conditions. In this paper, we introduce a novel PDE perspective on approximating these nonlocal periodic operators. Specifically, we investigate the behavior of the periodic first-order fractional Laplacian and Riesz transform when acting on nonperiodic functions, thereby initiating a new PDE theory for approximating solutions to equations with nonlocalities using neural networks. Moreover, we derive quantitative Sobolev estimates and utilize them to rigorously construct neural networks that approximate solutions to the two-dimensional periodic critically dissipative Surface Quasi-Geostrophic (SQG) equation.
\end{abstract}

\vspace{0.5cm}

% REQUIRED
{\bf{Keywords:}}
 Nonlocal periodic operators, periodic fractional Laplacian, periodic Riesz transform, surface quasi-geostrophic equation, neural network solution.

\smallskip

% REQUIRED

\section{Introduction}
Nonlocal operators appear in many partial differential equations (PDEs) extensively studied in the literature, such as the surface quasi-geostrophic (SQG) equations \cite{caffarelli2010drift, constantin2016critical, constantin2020estimates, constantin2018local, constantin2015long, constantin2012nonlinear, constantin1999behavior, kiselev2007global, ignatova2019construction, stokols2020holder}, the incompressible porous media model \cite{castro2009incompressible}, and fractional Boussinesq systems \cite{wu2014well, wu20182d, yang2014global, yang20183d}. Approximations of solutions to these PDEs do not always belong to the class of functions on which these nonlocal operators are defined and studied. One such case involves rigorous approximations of periodic solutions to nonlocal PDEs by neural networks, which generally lack periodic boundary conditions.

In this paper, we propose a new way to approximate these nonlocal periodic operators. Specifically, we systematically study how the periodic first-order fractional Laplacian and Riesz transform behave when applied to nonperiodic functions. This investigation lays the groundwork for a fresh approach to approximating solutions to equations with nonlocal features using neural networks. To illustrate the idea, we specifically consider the following setup: we denote the box $[-\pi, \pi]^2$ by $\TT^2$, which will also be identified with the two-dimensional torus in the periodic settings. We define the following operators 
\begin{equation}\label{def:tildelambda} 
\tilde{\l} \phi (x)
= P.V. \int_{\TT^2} (\phi(x) - \phi (x+y)) K(y) dy, 
\end{equation} and 
\be \label{def:tildeR}
\tilde{R} \phi (x)
= P.V. \int_{\TT^2} \phi (x+y) R^*(y) dy,
\ee
acting on possibly non-periodic functions, where P.V. means the Cauchy principle value, and the kernels $K$ and $R^*$ are defined by 
\begin{align}
&&K(y) &= \frac{2\Gamma\left(\frac{3}{2}\right)}{|\Gamma \left(-\frac{1}{2}\right) | \pi} \left(\fr{1}{|y|^3} + \sum\limits_{k \in \ZZ^2 \setminus \left\{0\right\}} \frac{1}{|y-2\pi k|^3} \right) \quad \text{and} \nonumber \\
 &&R^*(y)& = \frac{y}{2\pi |y|^3} + \sum\limits_{k \in \ZZ^2 \setminus \left\{0\right\}} \left(\frac{y + 2\pi k}{2\pi |y+2\pi k|^3} - \frac{ k}{ |2\pi k|^3} \right). \nonumber
\end{align}
The operators $\tilde{\Lambda}$ and $\tilde{R}$ respectively coincide with the periodic fractional Laplacian $\l := \sqrt{-\Delta}$ and Riesz transform $R := \na \l^{-1}$ for periodic functions $\phi$ (see \cite{constantin2015long}).

The periodic operators $R$ and $\l$ exhibit crucial analytical features when applied to periodic functions, including but not limited to integration by parts identities and boundedness properties on many functional spaces. Expectedly, these tools break down when the periodicity of the inputs is absent.  To this end, we study the behavior of $\tilde{\Lambda}$ and $\tilde{R}$ when acting on non-periodic functions and address the following questions:
\begin{enumerate}
    \item On which class of functions are $\tilde{\Lambda}$ and $\tilde{R}$ well-defined?
    \item How smooth are  $\tilde{\Lambda}$ and $\tilde{R}$?
    \item How are the inner products $\int_{\TT^2} \phi \tilde{\Lambda} \phi dx$ controlled?
    \item Is the operator $\tilde{R}$ bounded on $L^p$ spaces for $p \in (1, \infty)$?
\end{enumerate} 

We then apply the developed theory to the problem of approximating solutions to the periodic critical SQG equation by non-periodic physics-informed neural networks and rigorously estimate the resulting errors. The main challenges arise from seeking good control of the distances between the actual and approximating solutions. Due to the lack of periodicity of the neural networks, the time evolution of these distances is governed by unexpected nondissipative effects driven by the operator $\tilde{\l}$. However, we successfully derive a non-periodic maximum principle by which these distances are shown to be sufficiently small for all times. 

The idea of approximating nonlocal periodic operators by their non-periodic analogs constitutes the major novelty of this paper. Moreover, the techniques employed here appear promising for the development of a broader approximation theory for PDEs governed by operators with singular integral representations and different types of boundary conditions (e.g., Dirichlet and Neumann).

This paper is organized as follows. In section \ref{S2}, we introduce the functional spaces and notations that will be frequently used. In section \ref{S3}, we study the properties of the nonlocal non-periodic operators $\tilde \Lambda$ and $\tilde R$. Section \ref{S4} is dedicated to the construction of neural networks approximating the unique solution to the periodic critical SQG equation and the analysis of the resulting errors. The global smoothness is required herein and is established in Appendix~\ref{AP}.

\section{Functional Spaces} \la{S2}
 Let $X \subset \R^d$ where $d \in \left\{2,3\right\}$. For $1 \le p \le \infty$, we denote by $L^p(X)$ the Lebesgue spaces of measurable functions $f$ from $X$ to $\R$ (or $\RR^2)$ such that 
$$\|f\|_{L^p(X)} = \left(\int_{X} \|f\|^p dx\right)^{1/p} <\infty,$$ when $p \in [1, \infty)$ and
$$\|f\|_{L^{\infty}(X)} = esssup_{X}  |f| < \infty,$$ if $p = \infty$.
The $L^2$ inner product is denoted by $(\cdot,\cdot)_{L^2}$. For $k \in \NN$, we denote by $H^k(X)$ the classical Sobolev space of measurable functions $f$ from $X$ to $\R$ (or $\RR^2)$ %with weak derivatives of order $k$ 
such that  
$$ 
\|f\|_{H^k(X)}^2 = \sum\limits_{|\alpha| \le k} \|D^{\alpha}f\|_{L^2(X)}^2 < \infty.
$$ For $k \in \N$, we consider the spaces $C^k(X)$ of functions $f$ from $X$ to $\R$ (or $\RR^2$) such that 
$$\max\limits_{|\beta| \le k} \sup\limits_{x  \in X} |D^{\beta} f(x)| < \infty.$$ For a Banach space $(Y, \|\cdot\|_{Y})$ and $p\in [1,\infty]$, we consider the Lebesgue spaces $ L^p(0,T; Y)$ of functions $f$  from $Y$ to $\R$ (or $\RR^2)$ satisfying 
$$
\int_{0}^{T} \|f\|_{Y}^p dt  <\infty,
$$ with the usual convention when $p = \infty$. Finally, the letter $C$ will be used to denote a positive universal constant, which might change from line to line along the proofs. 

\section{Properties of the  Operators $\tilde \Lambda$ and $\tilde R$} \la{S3}

For $n \in \N$, let $n\TT^2=[-n\pi, n\pi]^2$. In this section, we investigate the properties of the operators $\tilde{\l}$ and $\tilde{R}$.

\begin{Thm} \la{trun11}
    The operator $\tilde{\l}$ defined in \eqref{def:tildelambda} satisfies the following properties: 
    \begin{enumerate}[ label=\textbf{(P\arabic*)}]
        \item\label{p1} Let $n \in \N$. Then $\tilde{\l}\phi (x)$ is defined for  $\phi \in C^2((n+1)\TT^2)$ and $x \in n\TT^2$.
        \item\label{p2} For $k\in\mathbb N$ and $\alpha \in \ZZ^2$ with $|\alpha| = k$, for any $\phi \in C^{k +2}( 2\TT^2)$, it holds that
        $
|D^{\alpha} \tilde{\l} \phi(x)| \\\le C \|\phi\|_{W^{k+2, \infty}(2\TT^2)}$ for any $x 
\in \TT^2$. Here $C$ is a positive universal constant independent of $\phi$.
        \item\label{p3} For any $\phi \in C^2(5\TT^2)$, it holds that 
        $
(\phi, \tilde{\l}\phi)_{L^2(\TT^2)} \ge \mathcal{A}(\phi)      
$  where
\be \label{firstbound}
|\mathcal{A}(\phi)|
\le C\|\phi\|_{L^2(5\TT^2)}^2 + C\left|\int_{\pa \TT^2} \int_{\TT^2} \frac{y \phi(x+y)^2 \cdot n}{|y|^3} dy d\sigma(x) \right|,
\ee for some positive universal constant $C$ independent of $\phi$. Here $n$ is the outward unit normal to $\pa \TT^2$ and $d\sigma(x)$ is the surface measure. In particular, if $\phi = \psi - \widehat{\psi}$ where $\psi$ is a periodic function on $\TT^2$ and $\widehat{\psi}$ is a smooth function on $2\TT^2$, it holds that 
\be \la{secondbound}
\beg{aligned}
&|\mathcal A(\psi - \widehat{\psi})| \le C\|\psi - \widehat{\psi}\|_{L^2(5\TT^2)}^2 
+ C\|\widehat{\psi}^2 (\cdot + 2\pi i) - \widehat{\psi}^2(\cdot)\|_{H^1(2\TT^2)} \\
&\quad + C\|\widehat{\psi}^2 (\cdot + 2\pi j) - \widehat{\psi}^2 (\cdot)\|_{H^1(2\TT^2)} + C\left(\|\psi\|_{L^{\infty}(\TT^2)} + \|\na \psi\|_{L^\infty(\TT^2)} \right) \\
& \quad \quad\quad\times \left(\|\widehat{\psi}(\cdot + 2\pi i) - \widehat{\psi}(\cdot)\|_{H^1(2\TT^2)} + \|\widehat{\psi}(\cdot + 2\pi j) - \widehat{\psi}(\cdot)\|_{H^1(2\TT^2)}\right).
\end{aligned}
\ee 
\end{enumerate}
\end{Thm}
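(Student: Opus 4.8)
\textit{Overall strategy.} Throughout I write $K(y)=c_K\bigl(|y|^{-3}+G(y)\bigr)$ with $c_K=\frac{2\Gamma(3/2)}{|\Gamma(-1/2)|\,\pi}>0$ and $G(y)=\sum_{k\in\ZZ^2\setminus\{0\}}|y-2\pi k|^{-3}$; since $|y-2\pi k|\ge\pi$ for $y\in\TT^2$ and $k\neq0$, the series and all its $y$-derivatives converge uniformly on $\TT^2$, so $G\in C^\infty(\TT^2)$ is bounded. Let $B_\rho$ be the disk of radius $\rho$ at the origin; both $\TT^2$ and $\TT^2\setminus B_\rho$ are invariant under $y\mapsto-y$. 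For \ref{p1}: if $x\in n\TT^2$, $y\in\TT^2$ then $x+y\in(n+1)\TT^2$, so $\phi(x+y)$ is defined; the $G$-part of $\tilde\l\phi(x)$ is finite because $G\in L^1(\TT^2)$ and $\phi$ is bounded, and for the singular part I write $\phi(x+y)-\phi(x)=\na\phi(x)\cdot y+r(x,y)$ with $|r(x,y)|\le C\|\phi\|_{C^2}|y|^2$, observe that $\int_{\TT^2\setminus B_\epsilon}\na\phi(x)\cdot y\,|y|^{-3}\,dy=0$ by the reflection symmetry of $\TT^2\setminus B_\epsilon$, and bound the remainder by $C\|\phi\|_{C^2}\int_{\TT^2}|y|^{-1}\,dy<\infty$, so the principal value exists. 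For \ref{p2} I differentiate the truncation $\tilde\l_\epsilon\phi(x)=\int_{\TT^2\setminus B_\epsilon}(\phi(x)-\phi(x+y))K(y)\,dy$ under the integral sign — every $x$-derivative lands on $\phi$ — and apply the estimate of \ref{p1} to $D^\alpha\phi\in C^2(2\TT^2)$ (the segments $[x,x+y]$ lie in $2\TT^2$ when $x\in\TT^2$, $y\in\TT^2$) to conclude that $D^\alpha\tilde\l_\epsilon\phi\to D^\alpha\tilde\l\phi$ uniformly on $\TT^2$ with $|D^\alpha\tilde\l\phi(x)|\le C\|\phi\|_{W^{k+2,\infty}(2\TT^2)}$.

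\textit{Proof of \eqref{firstbound}.} Write $(\phi,\tilde\l\phi)_{L^2(\TT^2)}=c_K(I+II)$, where $I=\mathrm{P.V.}\iint_{\TT^2\times\TT^2}\phi(x)(\phi(x)-\phi(x+y))|y|^{-3}\,dy\,dx$ and $II=\iint_{\TT^2\times\TT^2}\phi(x)(\phi(x)-\phi(x+y))G(y)\,dy\,dx$. Set $J=\mathrm{P.V.}\iint\phi(x+y)(\phi(x+y)-\phi(x))|y|^{-3}\,dy\,dx$. Then $I+J=\iint(\phi(x)-\phi(x+y))^2|y|^{-3}\,dy\,dx\ge0$ (a genuine integral, the integrand being $O(|y|^{-1})$), so $I\ge\tfrac12(I-J)$ and hence $(\phi,\tilde\l\phi)_{L^2(\TT^2)}\ge c_K\bigl(\tfrac12(I-J)+II\bigr)=:\mathcal A(\phi)$, with $\mathcal A(\phi)=\tfrac{c_K}{2}(I-J)+c_K\,II$. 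On the other hand $I-J=\mathrm{P.V.}\iint(\phi(x)^2-\phi(x+y)^2)|y|^{-3}\,dy\,dx$; doing the $x$-integral first and setting $F(y):=\int_{\TT^2}\phi(x+y)^2\,dx$ gives $I-J=-\mathrm{P.V.}\int_{\TT^2}|y|^{-3}(F(y)-F(0))\,dy$. Integrating by parts in $y$ via $|y|^{-3}=-\mathrm{div}_y(y|y|^{-3})$ — the $\pa B_\epsilon$ piece is $O(\epsilon)$ because $F\in C^2$ and $F(y)-F(0)$ has no constant term, hence vanishes in the limit — leaves $I-J=-T_1-T_2$ with $T_1=\int_{\pa\TT^2}\frac{y\cdot n(y)}{|y|^3}(F(0)-F(y))\,d\sigma(y)$ and $T_2=\mathrm{P.V.}\int_{\TT^2}\frac{y}{|y|^3}\cdot\na F(y)\,dy$. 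By the divergence theorem in $x$, $\na F(y)=\int_{\pa\TT^2}\phi(x+y)^2\,n(x)\,d\sigma(x)$, so $T_2=\int_{\pa\TT^2}\int_{\TT^2}\frac{y\,\phi(x+y)^2\cdot n}{|y|^3}\,dy\,d\sigma(x)$ is exactly the boundary integral in \eqref{firstbound}. Since $F(y)-F(0)=\int_{\TT^2+y}\phi^2-\int_{\TT^2}\phi^2$ and $\TT^2+y\subset2\TT^2$ for $y\in\pa\TT^2$, we get $|T_1|\le C\|\phi\|_{L^2(2\TT^2)}^2$, and a Cauchy–Schwarz estimate using $G\in L^\infty(\TT^2)$ gives $|II|\le C\|\phi\|_{L^2(2\TT^2)}^2$; combining these three facts proves \eqref{firstbound}.

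\textit{Proof of \eqref{secondbound}.} Put $\phi=\psi-\widehat\psi$ and expand $\phi^2=\psi^2-2\psi\widehat\psi+\widehat\psi^2$. The contributions $T_1,T_2$ are linear in $\Phi=\phi^2$, and for $\Phi=\psi^2$ the function $F(y)=\int_{\TT^2}\psi^2$ is constant in $y$ (periodicity), so its $T_1$- and $T_2$-contributions both vanish; thus only the $-2\psi\widehat\psi$ and $\widehat\psi^2$ pieces remain, together with $c_K\,II(\psi-\widehat\psi)$, which is already $\le C\|\psi-\widehat\psi\|_{L^2(5\TT^2)}^2$. For $g\in\{\psi\widehat\psi,\widehat\psi^2\}$ the decisive identity is $\pa_{y_1}F_g(y)=\int_{-\pi}^{\pi}\Delta_1 g(y_1-\pi,\,y_2+s)\,ds$ (and symmetrically for $\pa_{y_2}$), where $\Delta_1 g:=g(\cdot+2\pi i)-g(\cdot)$, $\Delta_2 g:=g(\cdot+2\pi j)-g(\cdot)$, because differences of $g$ across opposite faces of $\pa\TT^2$ are precisely $\Delta_k g$; moreover $\Delta_k(\psi\widehat\psi)=\psi\,\Delta_k\widehat\psi$ by periodicity of $\psi$, while $\Delta_k(\widehat\psi^2)$ appears directly. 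The $T_1$-terms: from the one-dimensional relation $\int_{\TT^2}(g(x+y)-g(x))\,dx$ equals an integral of $\Delta_1 g$ over a strip of width $\le|y_1|$ plus one of $\Delta_2 g$ over a strip of width $\le|y_2|$, so $|T_1|\le C\sum_{k}\|\Delta_k g\|_{L^2(2\TT^2)}$. The $T_2$-terms: $T_2=\sum_k\mathrm{P.V.}\int_{\TT^2}\frac{y_k}{|y|^3}h_k(y)\,dy$ with $h_k=\pa_{y_k}F_g$; writing $\frac{y_k}{|y|^3}=-\pa_{y_k}(|y|^{-1})$ and integrating by parts in $y$ (the $\pa B_\epsilon$ term again $O(\epsilon)$ since $h_k\in C^2$) turns each term into $-\int_{\pa\TT^2}|y|^{-1}h_k n_k\,d\sigma+\int_{\TT^2}|y|^{-1}\pa_{y_k}h_k\,dy$. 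The first is $\le C\|h_k\|_{L^\infty(\pa\TT^2)}\le C\|\Delta_k g\|_{H^1(2\TT^2)}$ by a one-dimensional Sobolev embedding in the tangential variable; the second I bound by Fubini, using that $\pa_{y_k}h_k$ is an average of $\pa_k\Delta_k g$ over the transverse direction, so $\|\pa_{y_k}h_k(\,\cdot\,,\text{along }y_k)\|_{L^\infty_{\text{transv}}}\le C\|\pa_k\Delta_k g\|_{L^2_{\text{transv}}}$, while $\int|y|^{-1}$ over a transverse slice is $O(\log\tfrac1{|y_k|})$, which is square-integrable in $y_k$; hence $\int_{\TT^2}|y|^{-1}|\pa_{y_k}h_k|\le C\|\pa_k\Delta_k g\|_{L^2(2\TT^2)}\le C\|\Delta_k g\|_{H^1(2\TT^2)}$. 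Finally $\|\Delta_k(\psi\widehat\psi)\|_{H^1(2\TT^2)}\le C(\|\psi\|_{L^\infty(\TT^2)}+\|\na\psi\|_{L^\infty(\TT^2)})\|\Delta_k\widehat\psi\|_{H^1(2\TT^2)}$, and collecting all the pieces (the $\psi^2$-part contributing nothing, the $\widehat\psi^2$-part giving the $\|\widehat\psi^2(\cdot+2\pi i)-\widehat\psi^2(\cdot)\|_{H^1}$ terms, and the $\psi\widehat\psi$-part the remaining products) yields \eqref{secondbound}.

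\textit{Main obstacle.} Properties \ref{p1}–\ref{p2} are routine — Taylor expansion plus the reflection symmetry of the truncated domain, which annihilates the nonintegrable linear part of the kernel. The substance lies in \ref{p3}: one must observe that $I-J$ depends on $\phi$ only through $\phi^2$ (which is exactly what lets the error $T_1$ be controlled by $\|\phi\|_{L^2}^2$ with no derivatives) and then carry out two successive integrations by parts — in $y$, then via the divergence theorem in $x$ — so that the leftover terms land precisely on the stated boundary integral. The hardest step is \eqref{secondbound}: estimating the borderline singular integral $T_2$, whose kernel is of order $|y|^{-2}$ in two dimensions, by the $H^1$-norms of the periodicity defects. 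A crude pointwise bound on $\frac{y_k}{|y|^3}$ would cost an $L^\infty$ or a derivative norm that \eqref{secondbound} does not permit, and the way out is to exploit the extra one-dimensional smoothing built into $\pa_{y_k}F_g$ together with the mixed-norm/$\log$-integrability trade-off described above.
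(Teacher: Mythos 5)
Your proof of \textbf{(P1)}--\textbf{(P2)} is essentially the same as the paper's (Taylor expansion plus annular cancellation of the odd kernel), but your treatment of \textbf{(P3)} is a genuinely different and noticeably more elementary route. The paper's argument splits off the corrector kernel $\tilde\Lambda_2$, introduces a cutoff $\eta$ supported in $3\TT^2$ to replace $\tilde\Lambda_1$ by the \emph{whole-space} fractional Laplacian $\Lambda_{\RR^2}$, invokes the pointwise C\'ordoba--C\'ordoba inequality $2f\,\Lambda_{\RR^2}f\ge\Lambda_{\RR^2}(f^2)$, then integrates by parts using $\Lambda_{\RR^2}=-\Delta\Lambda_{\RR^2}^{-1}$ and $\nabla\Lambda_{\RR^2}^{-1}=R_{\RR^2}$, so that the boundary integral it must estimate is a genuine Riesz transform, and to close the estimate (in particular for \eqref{secondbound}) it appeals to the $L^2(\TT^2)\to L^2(2\TT^2)$ boundedness of $\tilde R$ proved later in Theorem~\ref{rien}. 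You instead remain on $\TT^2$ throughout: your identity $I+J=\iint(\phi(x)-\phi(x+y))^2|y|^{-3}\ge0$ is precisely the integrated, carry-by-hand form of C\'ordoba--C\'ordoba, needing neither the cutoff trick nor the citation, and your two successive $y$-integrations by parts (via $|y|^{-3}=-\mathrm{div}(y|y|^{-3})$ and then $y_k|y|^{-3}=-\partial_k|y|^{-1}$) replace the Calder\'on--Zygmund $L^2$ bound with an explicit mixed-norm/$\log$-integrability trade for the $T_2$ term, so that \eqref{secondbound} is obtained without any singular-integral machinery. The gain is self-containment and a more transparent origin of the $H^1$-norms of the periodicity defects; the cost is that several points you compress into a phrase --- the strip decomposition of $F_g(y)-F_g(0)$ into $\Delta_1 g$- and $\Delta_2 g$-pieces, the $L^\infty$ control on $h_k|_{\partial\TT^2}$ via a line trace of $\Delta_k g\in H^1(2\TT^2)$, and the Cauchy--Schwarz-plus-$\log$ bound on $\int|y|^{-1}|\partial_kh_k|$ --- each require a few lines of honest computation to make rigorous, whereas the paper's use of the known $\tilde R$ bound dispatches them in one stroke. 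Both arguments are correct; yours is more elementary, the paper's is shorter once Theorem~\ref{rien} is available.
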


\begin{proof} Below we denote by $c=\frac{2\Gamma(\frac32)}{\left|\Gamma(-\frac12) \right|\pi}$. For \ref{p1}, we fix $\epsilon > 0$ and estimate 
        \be \label{nonp1}
        \beg{aligned}
&\int_{|y| > \epsilon} \chi_{\TT^2}(y)(\phi(x) - \phi(x+y)) K(y) dy
\\= &c\int_{|y| > \epsilon} \chi_{\TT^2}(y) (\phi(x) - \phi(x+y)) \left(\sum\limits_{k \in \ZZ^2 \setminus \left\{0\right\}} \fr{1}{|y - 2\pi k|^3}\right) dy
\\&\quad+ c\int_{\epsilon < |y| \le \pi} \chi_{\TT^2}(y) \fr{\phi(x) - \phi(x+y)}{|y|^3} dy 
+ c\int_{|y| > \pi} \chi_{\TT^2}(y) \fr{\phi(x) - \phi(x+y)}{|y|^3} dy,
\end{aligned}
        \ee where $\chi_{\TT^2}$ denotes the characteristic function of the set $\TT^2$. 
    If $y \in \TT^2$, then 
    \begin{equation}\label{est:y-2pik}
        |y - 2\pi k| \ge 2\pi |k| - \pi \sqrt{2} \ge (2-\sqrt{2}) \pi |k|, 
    \end{equation}
    for any $k \in \ZZ^2 \setminus \left\{0\right\}$. Consequently, the first and last integrals on the right-hand side of \eqref{nonp1} can be bounded by a constant (independent of $\epsilon$) multiple of $\|\phi\|_{L^{\infty}((n+1)\TT^2)}$. Since $\fr{y}{|y|^3}$ is a Calderon-Zygmund kernel, it holds that $\int_{\epsilon < |y| \leq \pi } \frac{y}{|y|^3} dy = 0$. By making use of this latter fact and the mean-value theorem, we deduce the existence of a parameter $t \in (0,1)$ such that 
    \begin{equation*}
\beg{aligned}
\int_{\epsilon < |y| \le \pi} \chi_{\TT^2}(y) \fr{\phi(x) - \phi(x+y)}{|y|^3} dy
= \int_{\epsilon < |y| \le \pi }  \frac{y}{|y|^3} \cdot \left(\na \phi (x + (1-t)y) - \na \phi (x) \right) dy,
\end{aligned} 
\end{equation*}
which yields
\begin{equation*}
\left|\int_{\epsilon < |y| \le \pi} \chi_{\TT^2}(y) \fr{\phi(x) - \phi(x+y)}{|y|^3} dy\right| \le \|\na \na \phi\|_{L^{\infty}(n\TT^2)} \int_{ |y| \le \pi} \frac{1}{|y|} dy \le C\|\na \na \Phi\|_{L^{\infty}(n\TT^2)},
\end{equation*}
after a second application of the mean-value theorem, with a constant $C$ independent of $\epsilon$. Consequently, $\tilde{\l}\phi$ is well-defined for any $\phi \in C^2((n+1)\TT^2)$. 

\smallskip

\noindent {\bf \ref{p2}:} Fix $x 
\in \TT^2$. Using the commutativity of $\tilde{\l}$ and $D^{\alpha}$ and Property \ref{p1}, we have 
\begin{align*}
|D^{\alpha} \tilde{\l} \phi(x)| 
&= c\left|\int_{\TT^2} \left(D^{\alpha}\phi(x) - D^{\alpha}\phi(x+y) \right) K(y) dy\right| \\
&\le C\bigg(\|D^{\alpha}\phi\|_{L^{\infty}(2\TT^2)} +\sum\limits_{|\beta| = k +2} \|D^{\beta} \phi\|_{L^{\infty}(2\TT^2)}\bigg).
\end{align*}

\noindent {\bf \ref{p3}:} For $x \in \TT^2$ and $\phi \in C^2(5\TT^2)$, we decompose $\tilde{\l} \phi(x)$ into the sum
\begin{equation*} 
\tilde{\l}\phi (x)= c\int_{\TT^2} \frac{\phi(x) - \phi(x+y)}{|y|^3} dy 
+ c\int_{\TT^2} \sum\limits_{k 
\in \ZZ^2 \setminus \left\{0\right\}} \frac{\phi(x) - \phi(x+y)}{|y - 2\pi k|^3} dy
:= \tilde{\l}_1\phi (x) + \tilde{\l}_2 \phi (x).
\end{equation*}
For any $y \in \TT^2$ we have $|y - 2\pi k|^3 \ge C|k|^3$, and so
\be 
|(\phi, \tilde{\l}_2\phi)_{L^2(\TT^2)}|
\le C\int_{\TT^2} \int_{\TT^2} |\phi(x)| |\phi(x) - \phi(x+y)| dydx
\le C\|\phi\|_{L^2(2\TT^2)}^2. \nonumber
\ee 
In order to estimate the inner product $(\phi, \tilde{\l}_1 \phi)_{L^2(\TT^2)}$, we take a smooth cutoff function $\eta$ such that $\eta \equiv 1$ on $2\TT^2$ and $\eta \equiv 0$ outside $3\TT^2$. For $x, y \in \TT^2$, we have $\eta(x) = \eta(x+y) = 1$, hence we can rewrite $(\phi, \tilde{\l}_1 \phi)_{L^2(\TT^2)}$ as
\begin{equation*} 
\beg{aligned}
(\phi, \tilde{\l}_1 \phi)_{L^2(\TT^2)}
&= c\int_{\TT^2} \int_{\TT^2} \frac{\eta(x) \phi(x) (\eta(x) \phi(x) - \eta(x+y) \phi(x+y)
)}{|y|^3} dydx
\\&= c\int_{\TT^2} \int_{\RR^2} \frac{\eta(x) \phi(x) (\eta(x) \phi(x) - \eta(x+y) \phi(x+y)
)}{|y|^3} dydx
\\&\quad\quad- c\int_{\TT^2} \int_{\RR^2 \setminus \TT^2} \frac{\eta(x) \phi(x) (\eta(x) \phi(x) - \eta(x+y) \phi(x+y)
)}{|y|^3} dydx
\\&:=A_1 + A_2.
\end{aligned}
\end{equation*}
Since $\eta$ is supported on $3\TT^2$, we have $\eta(x) =1$ and $\eta(x+y) = 0$ for $x \in \TT^2$ and $y \in \RR^2 \setminus 4\TT^2$. Thus, we can estimate $A_2$ as follows,
\be 
|A_2| \le C\int_{\TT^2} \int_{4\TT^2 \setminus \TT^2} \left(\phi(x)^2 + |\phi(x)| |\phi(x+y)| \right) dydx 
\le C\|\phi\|_{L^2(5\TT^2)}^2. \nonumber
\ee As $A_1$ is a constant multiple of $(\eta \phi, \l_{\RR^2} (\eta \phi))_{L^2(\TT^2)}$ where $\l_{\RR^2} = \sqrt{-\Delta}$ is the fractional Laplacian on $\RR^2$, we can bound $A_1$ from below by 
\be 
A_1 = C\int_{\TT^2} \eta(x) \phi(x) \l_{\RR^2} (\eta \phi)(x) dx 
\ge C\int_{\TT^2} \l_{\RR^2} (\eta^2 \phi^2) (x) dx, \nonumber
\ee where the last inequality follows from the estimate $2\eta(x) \phi(x) \l_{\RR^2} (\eta \phi)(x) \ge \l_{\RR^2} (\eta^2 \phi^2) (x)$ that holds for a.e. $x \in \RR^2$ (see \cite[Proposition~2.3]{cordoba2004maximum}). As $\l_{\RR^2}^2 = - \Delta$, we have 
\begin{align*}
    A_1 &\ge C\int_{\TT^2} - \Delta \l_{\R^2}^{-1} (\eta^2 \phi^2)(x) dx 
= C \int_{\TT^2} - \na \cdot (\na \l_{\R^2}^{-1} (\eta^2 \phi^2))(x) dx \\
& = C \int_{\pa \TT^2} - R_{\RR^2} (\eta^2 \phi^2)(x) \cdot n d \sigma(x),
\end{align*} 
after integrating by parts and making use of the identity $\na \l_{\R^2}^{-1} = R_{\RR^2}$ where $R_{\RR^2}$ is the Riesz transform on $\RR^2$. For a fixed $x \in \pa \TT^2$, we can decompose $R_{\RR^2}(\eta^2 \phi^2)(x)$ as 
\begin{align*}
R_{\RR^2}(\eta^2 \phi^2)(x)
&= c\int_{\RR^2} \frac{y \eta(x+y)^2 \phi(x+y)^2}{|y|^3} dy \\
&= c\int_{\RR^2 \setminus \TT^2} \frac{y \eta(x+y)^2 \phi(x+y)^2}{|y|^3} dy 
+ c\int_{\TT^2} \frac{y \eta(x+y)^2 \phi(x+y)^2}{|y|^3} dy.
\end{align*}
Since $\eta(x+y) = 1$ for $x \in \pa \TT^2$ and $y \in \TT^2$, and $\eta(x+y) = 0$ for $x \in \pa \TT^2$ and $y \in \RR^2 \setminus 4\TT^2$, this latter decomposition boils down to 
\begin{align*}
R_{\RR^2}(\eta^2 \phi^2)(x)
&= c\int_{4\TT^2 \setminus \TT^2} \frac{y \eta(x+y)^2 \phi(x+y)^2}{|y|^3} dy 
+ c\int_{\TT^2} \frac{y \phi(x+y)^2}{|y|^3} dy \\
&:= R_1(\eta^2 \phi^2)(x) + R_2\phi^2(x).
\end{align*}
Obviously, $|R_1(\eta^2 \phi^2)(x)| \le C\|\phi\|_{L^2(5\TT^2)}^2$ for $x\in\partial \TT^2$, which implies that 
\be 
\left|\int_{\pa \TT^2} R_1(\eta^2\phi^2)(x) \cdot n d\sigma(x)\right| \le C\|\phi\|_{L^2(5\TT^2)}^2. \nonumber
\ee  
Combining above yields \eqref{firstbound}. Now suppose that $\phi = \psi - \widehat{\psi}$ where $\psi$ is periodic on $\TT^2$ and $\widehat{\psi}$ is smooth on $2\TT^2$. In this case, we have 
\be \la{decompo}
R_2(\psi - \widehat{\psi})^2(x) = R_2(\psi^2)(x) + R_2(\widehat{\psi}^2)(x) -2c\int_{\TT^2} \frac{y \psi(x+y)\widehat{\psi}(x+y)}{|y|^3} dy.
\ee Since $\psi$ is periodic, the following cancellation
\be 
\int_{\pa \TT^2} R_2(\psi^2)(x) \cdot n d\sigma(x) = 0 \nonumber
\ee holds. As for the two-dimensional operator $R_2 = (R_{2,1}, R_{2,2})$ applied to $\widehat{\psi}^2$, we have 
\begin{align*}
\int_{\pa \TT^2} R_2(\widehat{\psi}^2)(x) \cdot n d\sigma(x)
&= \int_{-\pi}^{\pi} \left[R_{2,1}(\widehat{\psi}^2) (\pi, x_2) - R_{2,1} (\widehat{\psi}^2)(-\pi, x_2) \right] dx_2 
\\&\quad\quad+ \int_{-\pi}^{\pi} \left[R_{2,2}(\widehat{\psi}^2)(x_1, \pi) - R_{2,2} (\widehat{\psi}^2)(x_1, - \pi) \right]dx_1.
\end{align*}
 By the trace theorem, the boundedness of $R_{2}$ from $L^2(\TT^2)$ into $L^2(2\TT^2)$ (see Theorem \ref{rien} below), and the fact that $R_2$ and $\na$ commutes, we have 
\beg{align*}
&\left|\int_{\pa \TT^2} R_2(\widehat{\psi}^2)(x)  \cdot n d\sigma(x)\right|\\
&\qquad \le C\|R_{2,1}(\widehat{\psi}^2) (\pi, \cdot) - R_{2,1} \widehat{\psi}^2(-\pi, \cdot)\|_{L^2(-\pi, \pi)} 
\\&\qquad\quad\quad+ C\|R_{2,2}(\widehat{\psi}^2) (\cdot, \pi) - R_{2,2} \widehat{\psi}^2(\cdot, -\pi)\|_{L^2(-\pi, \pi)}
\\&\qquad\le C\|R_{2,1} (\widehat{\psi}^2)(\cdot + 2\pi i) - R_{2,1} (\widehat{\psi}^2)(\cdot)\|_{H^1(\TT^2)}
\\&\qquad\quad\quad+ C\|R_{2,2} (\widehat{\psi}^2)(\cdot + 2\pi j) - R_{2,2} (\widehat{\psi}^2)(\cdot)\|_{H^1(\TT^2)}
\\&\qquad\le C\|\widehat{\psi}^2 (\cdot + 2\pi i) - \widehat{\psi}^2(\cdot)\|_{H^1(2\TT^2)}
+ C\|\widehat{\psi}^2 (\cdot + 2\pi j) - \widehat{\psi}^2 (\cdot)\|_{H^1(2\TT^2)}.
\end{align*}
 In order to obtain good control of the last term in \eqref{decompo}, we fix $x \in \pa \TT^2$ and estimate the differences \begin{small}
\begin{align*}
D_1 = \int_{-\pi}^{\pi} \int_{\TT^2} \left[\frac{y_1 \psi ((\pi, x_2) + y)\widehat{\psi}((\pi, x_2) + y)}{|y|^3} -\frac{y_1 \psi ((-\pi, x_2) + y)\widehat{\psi}((-\pi, x_2) + y)}{|y|^3} \right] dy dx_2,
\end{align*} 
\end{small}
and  \begin{small}
\begin{align*}
D_2 = \int_{-\pi}^{\pi} \int_{\TT^2} \left[\frac{y_2 \psi ((x_1, \pi) + y)\widehat{\psi}((x_1, \pi) + y)}{|y|^3} -\frac{y_2 \psi ((x_1, -\pi) + y)\widehat{\psi}((x_1, -\pi) + y)}{|y|^3} \right] dy dx_1.
\end{align*}
\end{small}Indeed, the periodicity of $\psi$ implies that $\psi((\pi, x_2) + y) - \psi ((-\pi, x_2) + y) = 0$, and consequently, we can rewrite $D_1$ as 
\beg{align*}
D_1 &= \int_{-\pi}^{\pi} \int_{\TT^2} \frac{y_1 \psi ((-\pi, x_2) + y) \left[\widehat{\psi} ((\pi, x_2) + y) - \widehat{\psi}((-\pi, x_2) + y) \right] }{|y|^3} dy dx_2 
\\&= \int_{-\pi}^{\pi} R_{2,1} \left[\psi(-\pi, \cdot) (\widehat{\psi}(\pi, \cdot) - \widehat{\psi}(-\pi, \cdot))\right]dx_2,
\end{align*}
 which can be bounded by
\be 
|D_1| \le C\|\psi(\cdot) \left[\widehat{\psi}(\cdot + 2\pi i) - \widehat{\psi}(\cdot) \right]\|_{H^1(2\TT^2)}, \nonumber
\ee after a straightforward application of the trace theorem and use of the boundedness of $R_{2}$ from $H^1(\TT^2)$ to $H^1(2\TT^2)$. In view of Sobolev product estimates, we infer that 
\be 
D_1 \le C\left(\|\psi\|_{L^{\infty}(2\TT^2)} + \|\na \psi\|_{L^\infty(2\TT^2)} \right) \|\widehat{\psi}(\cdot + 2\pi i) - \widehat{\psi}(\cdot) \|_{H^1(2\TT^2)}. \nonumber
\ee A similar approach yields 
\be 
D_2 \le C\left(\|\psi\|_{L^{\infty}(2\TT^2)} + \|\na \psi\|_{L^\infty(2\TT^2)} \right) \|\widehat{\psi}(\cdot + 2\pi j) - \widehat{\psi}(\cdot) \|_{H^1(2\TT^2)}. \nonumber
\ee 
As $\psi$ is periodic, $\|\psi\|_{L^{\infty}(2\TT^2)} = \|\psi\|_{L^{\infty}(\TT^2)}$ and $\|\na\psi\|_{L^{\infty}(2\TT^2)} = \|\na\psi\|_{L^{\infty}(\TT^2)}$.
Therefore, we conclude that \eqref{secondbound} holds. 
\end{proof}

Next, we address the properties of the operator $\tilde{R}$. 

\beg{Thm} \la{rien}  The operator $\tilde{R}$ defined in \eqref{def:tildeR} satisfies the following properties: 
\begin{enumerate}[ label=\textbf{(R\arabic*)}]
    \item\label{r1}Let $n \in \N$. Then $\tilde{R}\phi(x)$ is well-defined for any $\phi \in C^1((n+1)\TT^2)$ and $x \in n \TT^2$.
    \item\label{r2} For $|\alpha| = k$, it holds that
        $
|D^{\alpha} \tilde{R} \phi(x)|\le C \|\phi\|_{W^{k+1, \infty}(2\TT^2)}$ for any $\phi \in C^{k +1}(2\TT^2)$ and any $x \in \TT^2$. Here $C$ is a positive universal constant independent of $\phi$.
    \item\label{r3} It holds that $\|\tilde{R}\phi\|_{L^2(\TT^2)} \le C\|\phi\|_{L^2(2\TT^2)}$.
\end{enumerate}
\end{Thm} 

\begin{proof} We establish the properties \ref{r1}, \ref{r2} and \ref{r3} separately. 

\smallskip

\noindent {\bf \ref{r1}:} The vanishing of the spatial integral of the kernel $\frac{y}{|y|^3}$ over the annulus $\epsilon < |y|<\pi$ gives 
    \be 
\left|\int_{\epsilon < |y| <\pi} \frac{\phi(x+y)y}{|y|^3} dy\right|
= \left|\int_{\epsilon < |y| <\pi} \frac{(\phi(x+y) - \phi(x))y}{|y|^3} dy \right|
\le C\|\na \phi\|_{L^{\infty}((n+1)\TT^2)}, \nonumber
    \ee 
    where we have applied the mean-value theorem, and the constant $C$ is independent of $\epsilon$. 
    For a fixed $k\in \mathbb Z^2 \setminus \{0\}$, we consider the function $f(y) = \frac{y+2\pi k}{2\pi|y+2\pi k|^3}$. For $|y|>\epsilon$, there exists some $t\in(0,1)$ such that $f(y)-f(0)=\nabla f(ty) y$ by the mean-value theorem. A straightforward calculation and an application of \eqref{est:y-2pik} yield  $|\na f|\leq \frac{C}{|y+2\pi k|^3} \leq \frac{C}{|k|^3}$ for $|y|>\epsilon$ and $y\in\TT^2$, thus
    \beg{align*}
     &\left|\int_{|y| >\epsilon} \chi_{\mathbb T^2}(y) \phi(x+y) \sum\limits_{k \in \ZZ^2 \setminus \left\{0\right\}} \left(\frac{y + 2\pi k}{2\pi |y+2\pi k|^3} - \frac{2\pi k}{2\pi |2\pi k|^3} \right)  dy \right| 
    \\&\quad\quad\leq C \sum\limits_{k \in \ZZ^2 \setminus \left\{0\right\}}|k|^{-3} \|\phi\|_{L^\infty ((n+1)\TT^2)} \leq C\|\phi\|_{L^\infty ((n+1)\TT^2)}.
    \end{align*}

\noindent {\bf \ref{r2}:} The property \ref{r2} follows from \ref{r1} and the fact that $\tilde{R}$ commutes with differential operators.

\smallskip

\noindent {\bf \ref{r3}:} For a fixed $x \in \TT^2$, we have 
    \beg{align*}
\tilde{R}\phi(x) &= \int_{\TT^2} \phi(x+y) R^*(y) dy
\\&= \int_{\RR^2} \chi_{\TT^2 + \TT^2}(x+y) \phi(x+y) \chi_{\TT^2} (y)R^*(y) dy = \tilde{R} (\chi_{\TT^2 + \TT^2} \phi)(x),
    \end{align*} where the second equality follows from the fact that both $x$ and $y$ are in $\TT^2$. Thus, we deduce that $\|\tilde{R}(\chi_{\TT^2 + {\TT^2} }\phi)\|_{L^2(\RR^2)} \le C\|\chi_{\TT^2+\TT^2}\phi\|_{L^2(\R^2)}$ (see \cite[Chapter~II Theorem~1]{stein1970singular}). As $\tilde{R}(\chi_{\TT^2 + \TT^2} \phi)$ and $\tilde{R}\phi$ coincides on $\TT^2$, we obtain the bound 
    \beg{align*}
    \|\tilde{R} \phi\|_{L^2(\TT^2)} 
    &= \|\tilde{R}(\chi_{\TT^2 + {\TT^2} }\phi)\|_{L^2(\TT^2)}
    \le \|\tilde{R}(\chi_{\TT^2 + {\TT^2} }\phi)\|_{L^2(\RR^2)} 
    \\&\le C\|\phi\|_{L^2(\TT^2 + \TT^2)} \le C\|\phi\|_{L^2(2\TT^2)}.
    \end{align*}
\end{proof}

\section{Applications to the Critical SQG Equation} \la{S4}
In the section, we applied the developed theory for the operators $\tilde \l$ and $\tilde R$ to prove the existence of neural networks approximating the  solutions to the two-dimensional critically dissipative surface quasi-geostrophic (SQG) equation:
\be \label{SQG}
\pa_t \psi + u \cdot \na \psi + \l \psi = 0, \quad
 \text{with the advection velocity} \quad  
u = R^{\perp} \psi, 
\ee where $\l = \sqrt{-\Delta}$ is the fractional Laplacian of order 1 and $ $$R^{\perp} = (-R_2, R_1)$ is a rotation of the two-dimensional Riesz transform $R = (R_1, R_2) = \na \l^{-1}$.  The equation \eqref{SQG} is posed on the box  $\TT^2 = [-\pi, \pi]^2$ equipped with periodic boundary conditions. The initial data associated with \eqref{SQG}
is  assumed to have a vanishing spatial average over $\TT^2$ and is denoted by 
\be \label{initialdata}
\psi (x,0) = \psi_0(x), \quad \forall x \in \TT^2.
\ee 

The critical SQG equation describes the time evolution of a surface temperature in rapidly rotating stratified fluids and is interesting from a physical point of view due to its relevance in geophysics, meteorology, and oceanography \cite{constantin1994formation,held1995surface}. It is mathematically challenging due to its nonlocal and nonlinear structure by which the advection and dissipation have the same differential order, balancing rather than being dissipatedly dominated by each other. As a matter of fact, the global regularity of solutions to the critical SQG model has been a center of interest over the last three decades in the presence and absence of spatial boundaries (see \cite{caffarelli2010drift, constantin2016critical, constantin2020estimates, constantin2018local, constantin2015long, constantin2012nonlinear, constantin1999behavior, kiselev2007global, ignatova2019construction, stokols2020holder} and references therein). Its numerical solutions have also been explored in works such as
\cite{constantin1994singular, held1995surface, constantin2012new, ohkitani2012asymptotics, song2017fractional, bonito2021numerical}. 

Among numerical methods to solve PDEs, neural network approaches \cite{raissi2018hidden,raissi2019physics,sirignano2018dgm,HaJeE:18,beck2019machine,yu2018deep} have received great attention in recent years, due to their ability and flexibility to approximate complex and/or high-dimensional functions, as well as their ease of implementation. In this section, we construct  neural networks approximating the solutions to the periodic critical SQG equation in all Sobolev spaces, and we rigorously estimate the errors resulting from these approximations. This construction and analysis are not trivial as neural networks are generally non-periodic functions while the nonlocal operators (namely $\l$ and $R^{\perp}$) involved in the evolution of the temperature $\psi$ are defined on a class of regular periodic functions. In other words, neural networks, unless intentionally constructed for the purpose, typically do not belong to the domains of $\l$ and $R^{\perp}$. This gives rise to the necessity for the nonlocal non-periodic operators $\tilde{\l}$ and $\tilde{R}^{\perp}$ to suitably approximate $\l$ and $R^{\perp}$, respectively. 

\subsection{Physics-Informed Neural Networks (PINNs)} 
We consider neural networks $\psi_{\theta}$ defined by composition of layer functions of the form $\sigma (\omega x + b)$, where $\omega = (\omega_1, \dots, \omega_d)$ is a weight vector, $x \in \R^d$ is the input, and  $b \in \R$ is a bias parameter. The subscript $\theta$ denotes the collection of all the weights and biases. The function $\sigma$ is called an activation function and will be taken to be $\sigma = \tanh (\xi) = \frac{e^{\xi} - e^{-\xi}}{e^{\xi} + e^{-\xi}}$ in this paper. We refer to \cite{higham2019deep} for a more comprehensive mathematical introduction to network architectures.

A common approach to determine the optimal parameters $\theta^\ast$ for $\psi_\theta$ is to minimize the residuals arising from the PDE operators, initial conditions, and the boundary conditions, as described in the Physics-Informed Neural Networks (PINNs) method.
PINNs %, a machine learning framework designed to approximate solutions to differential equations, 
were initially introduced in the 90s by \cite{dissanayake1994neural,lagaris1998artificial,lagaris2000neural} and experienced a resurgence in recent years, spearheaded by \cite{raissi2018hidden,raissi2019physics}. Subsequently, there has been a remarkable surge in the development and utilization of PINNs for diverse applications related to various PDEs. An abbreviated compilation of references includes \cite{cuomo2022scientific,jagtap2021extended,jagtap2020conservative,lu2021deepxde,mao2020physics,karniadakis2021physics} and references therein. It is noteworthy that the deep Galerkin method \cite{sirignano2018dgm} shares a comparable conceptual foundation with PINNs.

In addition to evaluating the computational efficiency and accuracy of PINNs in numerically solving PDEs, researchers are actively engaged in a thorough study of their theoretical guarantees. An abbreviated list of works, including \cite{mishra2022estimates1, mishra2022estimates, de2022error, de2023error, de2022generic, biswas2022error}, showcases authors who have conducted comprehensive investigations into the error analysis of PINNs, examining their effectiveness in approximating various types of PDEs. However, we believe this is the first work that addresses the rigorous PINNs analysis of PDEs involving nonlocal periodic operators.

This section is devoted to the rigorous error analysis of PINNs for approximating solutions to critical SQG equations.
For that objective, we consider various types of residuals. The pointwise \emph{PDE residual} is defined as 
\begin{equation*}
\mathcal{R}_{i}[\theta] (t,x) = \pa_t \psi_{\theta} + \tilde{R}^{\perp}\psi_{\theta} \cdot \na \psi_{\theta} + \tilde{\l} \psi_{\theta}, \quad (t,x)\in [0,T]\times\mathbb T^2,
\end{equation*}
and the \emph{initial residual} is denoted by
\begin{equation*}
   \mathcal{R}_{t}[\theta] (x) = \psi_{\theta}(x,0) - \psi_0, \quad x\in \mathbb T^2.
\end{equation*}
For a regularity index $s\in\mathbb N$, define the \emph{boundary residual} as
\begin{align*}
\mathcal{R}&_{b}[s;\theta](t,x) = \mathcal{R}_{b}[s;\theta](t,x_1,x_2) \\
&= \left[\sum\limits_{|\alpha| \le s} \left(D^{\alpha} \psi_{\theta} (x_1, \pi) - D^{\alpha} \psi_{\theta}(x_1, -\pi)\right)^2 +\left(D^{\alpha} \psi_{\theta} (\pi, x_2) - D^{\alpha} \psi_{\theta}(-\pi, x_2)\right)^2 \right] \\&=:\mathcal{R}_{b,1}[s;\theta](t,x_1) + \mathcal{R}_{b,2}[s;\theta](t,x_2), \quad t\in[0,T],\, x_1,x_2\in [-\pi,\pi],
\end{align*}
where $x_1$ and $x_2$ are the first and the second component of $x$,
and the \emph{periodicity residual} as 
\begin{align*}
\mathcal{R}&_{per}[s;\theta] (t,x) \\
&= \sum\limits_{|\alpha| \le s} \sum\limits_{k,m=-2}^{2} (D^{\alpha} \psi_{\theta}(x) - D^{\alpha}\psi_{\theta} (x - 2k\pi i - 2m\pi j))^2
\\&\quad+  \sum\limits_{|\alpha| \le s+1} (D^{\alpha} \psi_{\theta}(x) - D^{\alpha}\psi_{\theta} (x + 2\pi i))^2 + (D^{\alpha} \psi_{\theta}(x) - D^{\alpha}\psi_{\theta} (x + 2\pi j))^2
\\&\quad\quad+ \sum\limits_{|\beta| \le 1} \sum\limits_{|\alpha| \le s} |D^{\beta} (D^{\alpha} \psi_{\theta})^2 (x+ 2 \pi i) - D^{\beta} (D^{\alpha} \psi_{\theta})^2(x) |^2
\\&\quad\quad\quad+\sum\limits_{|\beta| \le 1}  \sum\limits_{|\alpha| \le s} |D^{\beta} (D^{\alpha} \psi_{\theta})^2 (x+ 2 \pi j) - D^{\beta} (D^{\alpha} \psi_{\theta})^2(x) |^2, \quad (t,x)\in [0,T]\times 2\mathbb T^2,
\end{align*}
where $i= (1,0)$ and $j = (0,1)$. 
The optimal parameters, denoted as $\theta^\ast$, are sought to minimize all these residuals simultaneously. Specifically, when $\psi_{\theta^\ast} = \psi$ represents the true periodic solution, all residuals are expected to vanish. It is important to emphasize that the introduction of the periodicity residual $\mathcal{R}_{per}$ distinguishes our work from prior studies on PINNs. As will be demonstrated later, this addition is essential for controlling the correctors arising from the estimates of the non-periodic operators $\tilde{\l}$ and $\tilde{R}$.

To analyze the performance of neural networks approximating the solution to the SQG equation, we categorize errors into two types: the \emph{generalization error} and the  \emph{total error}.
For a regularity index $s\in\mathbb N$,
the generalization error is defined to be 
\be \label{generaler}
\mathcal{E}_{G}[s;\theta] = \left(\mathcal{E}_{G}^i [s;\theta]^2 + \mathcal{E}_{G}^t [s;\theta]^2 + \mathcal{E}_{G}^b [s;\theta]^2 + \mathcal{E}_{G}^{per} [s;\theta]^2 + \lambda \mathcal{E}_{G}^p [s;\theta]^2 \right)^{\fr{1}{2}},
\ee 
where 
\begin{align*}
   & \mathcal{E}_{G}^i [s;\theta]^2 
= \int_{0}^{T} \|\mathcal{R}_{i}[\theta](t)\|_{H^s(\TT^2)}^2 dt,
\\
&\mathcal{E}_{G}^t [s;\theta]^2
= \|\mathcal{R}_{t}[\theta]\|_{H^s(\TT^2)}^2,
\\
&\mathcal{E}_{G}^b [s;\theta]^2
=  \int_0^T \left(\int_{-\pi}^\pi \mathcal{R}_{b,1}[s;\theta](t,x_1) dx_1 + \int_{-\pi}^\pi \mathcal{R}_{b,2}[s;\theta](t,x_2) dx_2\right) dt ,
\\
&\mathcal{E}_G^{per}[s;\theta]^2 = \int_{0}^{T}  \int_{2\TT^2} \mathcal{R}_{per}[s;\theta](t) dt,
\\
&\mathcal{E}_{G}^p [s;\theta]^2
= \int_{0}^{T} \|\psi_{\theta}(t) \|_{H^{s+3} (\TT^2)}^2 dt.
\end{align*}
 Here, $\lambda$ is a constant to be determined later. The term $\mathcal{E}_{G}^p$ represents the penalty term and is inspired from \cite{biswas2022error}. It is crucial in order to obtain \textit{a priori} error estimate independent of the neural networks. 

The total error, defined by 
\be 
\mathcal{E}[s; \theta]^2 = \int_{0}^{T} \|\psi - \psi_{\theta}\|_{H^s(\TT^2)}^2 dt, \nonumber
\ee 
measures the distance between the neural network approximation and the true solution to \eqref{SQG}--\eqref{initialdata}. In the sequel, we aim to answer the following two questions:

\begin{enumerate}[ label=\textbf{Q\arabic*}]
    \item\label{Q1}\hspace{-4pt}: The existence of neural networks $\psi_\theta$ such that the generalization error $\mathcal{E}_{G}[s;\theta] < \epsilon$ for arbitrary $\epsilon > 0$ (\emph{cf.} {\bf Theorem \ref{generalmain}}); 
    \item\label{SubQ2}\hspace{-4pt}: The control of the total error by the generalization error, i.e., $\mathcal{E}[s; \theta] \lesssim \mathcal{E}_{G}[s;\theta]$ (\emph{cf.} {\bf Theorem \ref{Q2answer}});
  
\end{enumerate}

\subsection{Error Estimates} In this subsection, we answer the two questions raised above, and we need the following lemma:

\beg{lem}[{\cite[Lemma A.1]{hu2023higher}}] Let $\Omega = \prod\limits_{i=1}^{d} [a_i, b_i]$. Suppose $f \in H^m(\Omega).$ Let $N>5$ be an integer. Then there exists a \texttt{tanh} neural network $\widehat{f}^N$ such that 
$$\|f - \widehat{f}^N\|_{H^k(\Omega)} \le C_{k,d,f,\Omega} (1 + \ln^k N)N^{-m+k}, $$ for any $k \in \left\{0, 1, \dots, m-1 \right\}.$
\end{lem}

\beg{Thm} \label{generalmain} Let $s \ge 7$ and $n \ge 2$ be integers. Let $T>0$ be an arbitrary time. Suppose that $\psi_0 \in H^s(\TT^2)$. Fix a small parameter $\epsilon >0$ and an integer $\ell \in \left\{0, \dots, s-7\right\}$. There exists a real number $\lambda >0$, an integer $N>5$ depending on $\epsilon$ and $s$, and a \texttt{tanh} neural network $\widehat{\psi}= \psi_{\theta}^N$ %with two hidden layers of width at most $3\lceil \frac{s+n-4}{2} \rceil |P_{s-3,4}| + \lceil(24\pi + T)(N-1)\rceil$ and $3 \lceil \frac{3+n}{2} \rceil |P_{4,4}|N^3 \lceil(12\pi)^2T\rceil$ 
such that $\mathcal{E}_G[\ell;\theta] \le \epsilon.$ %Here $|P_{m,k}|$ is the cardinality of the set $\left\{\alpha \in \NN^k: |\alpha| = m \right\}.$ 
\end{Thm}

\begin{proof} Since the initial data $\psi_0 \in H^s(\TT^2)$, it follows from Theorem \ref{smoothnessSQG} that the exact solution $\psi$ to the SQG problem \eqref{SQG}--\eqref{initialdata} is $C^{s-2}([0,T]\times \TT^2)$. As $\psi$ is periodic, it also follows that $\psi\in C^{s-2}([0,T]\times 6\TT^2)$. Consequently, there exists a neural network $\widehat{\psi}$ such that 
\be \label{neuraldecay}
\|\psi - \widehat{\psi}\|_{H^k([0,T] \times 6\TT^2)} \le C_{s, k,\psi,T} \frac{1+ \ln^k N}{N^{{ s-2-k}}},
\ee for any $k \in \left\{0,1, \dots, s-3\right\}$. Here, we abused notation and denoted the periodic extension of $\psi$ by $\psi$ as well. In order to estimate the generalization error, we implement the following six steps.

\smallskip
\noindent{\bf{Step 1. Estimation of $\mathcal{E}_{G}^i [\ell;\theta]$}.} We estimate the time derivative terms as follows,
\be 
\|\pa_t \psi - \pa_t \widehat{\psi}\|_{H^{\ell}([0,T] \times \TT^2)}  \le \|\psi - \widehat{\psi}\|_{H^{\ell + 1}([0,T] \times \TT^2)} 
\le C\frac{1 + \ln^{\ell + 1}N}{N^{-\ell + s - 3}}, \nonumber
\ee for any $\ell \in \left\{0, 1, \dots, s-4 \right\}$. Using the fact that $\l$ and $\tilde{\l}$ coincide on the space of periodic functions, the property \ref{p2} of the operator $\tilde{\l}$, and standard continuous Sobolev embedding, we have
{\small\begin{equation*} 
\|\l\psi - \tilde{\l} \widehat{\psi}\|_{H^{\ell}([0,T] \times \TT^2)}
= \|\tilde{\l}(\psi - \widehat{\psi})\|_{H^{\ell}([0,T] \times \TT^2)}
\le C\|\psi - \widehat{\psi}\|_{H^{\ell + 4} ([0,T] \times 2\TT^2)}
\le C\frac{1 + \ln^{\ell + 4}N}{N^{-\ell + s - 6}},
\end{equation*}}
for any $\ell \in \left\{0, 1, \dots, s-7 \right\}$.
As $R$ and $\tilde{R}$ coincide for periodic functions, we have 
\be \nonumber
\beg{aligned}
&\|R^{\perp} \psi \cdot \na \psi - \tilde{R}^{\perp} \widehat{\psi} \cdot \na \widehat{\psi}\|_{H^{\ell}([0,T] \times \TT^2)} 
\\&\quad\quad\le \|\tilde{R}^{\perp} (\psi - \widehat{\psi}) \cdot \na \widehat{\psi} \|_{H^{\ell}([0,T] \times \TT^2)} 
+ \|R^{\perp} {\psi} \cdot \na (\psi - \widehat{\psi})\|_{H^{\ell}([0,T] \times \TT^2)}
\\&\quad\quad\le \|\tilde{R}^{\perp} (\psi - \widehat{\psi}) \cdot \na \widehat{\psi} \|_{H^{\ell+1}([0,T] \times \TT^2)} 
+ \|R^{\perp} {\psi} (\psi - \widehat{\psi})\|_{H^{\ell+1}([0,T] \times \TT^2)}
\\&\quad\quad\le C\|\tilde{R}^{\perp}(\psi - \widehat{\psi})\|_{H^{\ell +1}([0,T] \times \TT^2)} \|\widehat{\psi}\|_{H^{\ell +2}([0,T] \times \TT^2)}  
\\&\quad\quad\quad\quad\quad\quad+ C\|R^{\perp}{\psi}\|_{H^{\ell +1}([0,T] \times \TT^2)}  \|\psi - \widehat{\psi}\|_{H^{\ell +1}([0,T] \times \TT^2)}, 
\end{aligned}
\ee for any $\ell \ge 1$, where the divergence-free condition of the Riesz transform $R^{\perp}$ and the Banach Algebra property of the space-time Sobolev spaces are exploited. As $R^{\perp}$ is bounded spatially on $H^{\ell +1}(\TT^2)$, and $\pa_t$ and $R^{\perp}$ commutes, we infer that $R^{\perp}$ is bounded on $H^{\ell +1}([0,T] \times \TT^2)$. Moreover, the non-periodic operator $\tilde{R}$ maps $H^{\ell + 4}([0,T] \times 2\TT^2)$ continuously to $H^{\ell+1 }([0,T] \times \TT^2)$, a fact that follows from the property \ref{r2} and continuous Sobolev embedding. Consequently, we obtain the bound 
\be \nonumber
\beg{aligned} 
&\|R^{\perp} \psi \cdot \na \psi - \tilde{R}^{\perp} \widehat{\psi} \cdot \na \widehat{\psi}\|_{H^{\ell}([0,T] \times \TT^2)} 
\\&\quad\quad\le C\left(\|\psi\|_{H^{\ell+1}([0,T] \times \TT^2)}  + \|\widehat{\psi}\|_{H^{\ell+2}([0,T] \times \TT^2)}  \right) \|\psi - \widehat{\psi}\|_{H^{\ell + 4}([0,T] \times 2\TT^2)}.
\end{aligned}
\ee Using the inequality
\be 
\|\widehat{\psi}\|_{H^{\ell+2}([0,T] \times \TT^2)}
\le \|\psi - \widehat{\psi}\|_{H^{\ell+2}([0,T] \times \TT^2)} + \|\psi\|_{H^{\ell+2}([0,T] \times \TT^2)}, \nonumber
\ee  we deduce that
\be \nonumber
\beg{aligned}
&\|R^{\perp} \psi \cdot \na \psi - \tilde{R}^{\perp} \widehat{\psi} \cdot \na \widehat{\psi}\|_{H^{\ell}([0,T] \times \TT^2)} 
\\&\quad\quad\le C\left(\|\psi\|_{H^{\ell+2}([0,T] \times \TT^2)}  + \|\psi - \widehat{\psi}\|_{H^{\ell+2}([0,T] \times \TT^2)}  \right) \|\psi - \widehat{\psi}\|_{H^{\ell + 4}([0,T] \times 2\TT^2)},
\end{aligned}
\ee
which yields
{\small\begin{equation*}
\|R^{\perp} \psi \cdot \na \psi - \tilde{R}^{\perp} \widehat{\psi} \cdot \na \widehat{\psi}\|_{H^{\ell}([0,T] \times \TT^2)} 
\le C\left(\|\psi\|_{H^{\ell+2}([0,T] \times \TT^2)}  + \frac{1+ \ln^{\ell +2}N}{N^{-\ell +s - 4}}\right) \frac{1+ \ln^{\ell +4}N}{N^{-\ell +s -6}},
\end{equation*}}
for all $\ell \in \left\{1, \dots, s-7  \right\}$ if $s\geq 8$, after making use of \eqref{neuraldecay}. Therefore, we conclude that
\be  \nonumber
\mathcal{E}_{G}^{i}[\ell; \theta]
\le C_{s,\ell,\psi,T} \left(1 + \|\psi\|_{H^{\ell + 2}([0,T] \times \TT^2)}  + \frac{1+ \ln^{\ell +2}N}{N^{-\ell +s - 4}} \right) \frac{1+ \ln^{\ell +4}N}{N^{-\ell +s - 6}},
\ee for any positive integer $\ell \le s-7$ when $\geq 8$. If $s=7$ and $\ell = 0$, then the nonlinear term can be bounded as follows,
\be \nonumber
\beg{aligned}
&\|R^{\perp} \psi \cdot \na \psi - \tilde{R}^{\perp} \widehat{\psi} \cdot \na \widehat{\psi}\|_{L^2([0,T] \times \TT^2)} 
\\&\quad\quad\le C\|\tilde{R}^{\perp}(\psi - \widehat{\psi})\|_{L^2([0,T] \times \TT^2)} \|\na \widehat{\psi}\|_{L^{\infty}([0,T] \times \TT^2)}  
\\&\quad\quad\quad\quad+ C\|R^{\perp}{\psi}\|_{L^{\infty}([0,T] \times \TT^2)}  \|\na (\psi - \widehat{\psi})\|_{L^2([0,T] \times \TT^2)} 
\\&\quad\quad\le C\|\psi - \widehat{\psi}\|_{H^3([0,T] \times 2\TT^2)} \left(\|\widehat{\psi}\|_{H^3([0,T] \times \TT^2)}  + \|\psi\|_{H^3([0,T] \times \TT^2)} \right)
\\&\quad\quad\le  C\|\psi - \widehat{\psi}\|_{H^3([0,T] \times 2\TT^2)} \left(\|\psi - \widehat{\psi}\|_{H^3([0,T] \times \TT^2)}  + \|\psi\|_{H^3([0,T] \times \TT^2)} \right).
\end{aligned}
\ee Thus we obtain
\be \nonumber
\mathcal{E}_{G}^{i}[0; \theta]
\le C_{s,\psi,T} \left(1 + \|\psi\|_{H^{3}([0,T] \times \TT^2)}  + \frac{1+ \ln^{3}N}{N^{s - 5}} \right) {\frac{1+ \ln^{3}N}{N^{s-5}}}.
\ee

\noindent{\bf{Step 2. Estimation of $\mathcal{E}_{G}^t[\ell;\theta]$}.} We denote by $\Omega$ the space-time box $[0,T] \times \TT^2$. By the trace theorem, we have
\be \nonumber
\beg{aligned}
\mathcal{E}_{G}^t[\ell;\theta]
&= \|\widehat{\psi}(x,0) - \psi(x,0)\|_{H^{\ell}(\TT^2)}
\le \sum\limits_{|\alpha| \le \ell }\|D^{\alpha}(\widehat{\psi} - \psi)\|_{L^2(\pa \Omega)}
\\&\le  C\sum\limits_{|\alpha| \le \ell} \|D^{\alpha}(\widehat{\psi} - \psi)\|_{H^1(\Omega)}
\le C\|\widehat{\psi} - \psi\|_{H^{\ell +1 }(\Omega)},
\end{aligned}
\ee from which we obtain the error estimate 
\be \nonumber
\mathcal{E}_{G}^t[\ell;\theta]
\le C\frac{1 + \ln^{\ell + 1}N}{N^{-\ell +s -3}},
\ee for any nonnegative integer $\ell \le s-4$, due to \eqref{neuraldecay}.

\smallskip

\noindent{\bf{Step 3. Estimation of $\mathcal{E}_{G}^b [\ell;\theta]$}.} Another application of the trace theorem and the periodic boundary conditions obeyed by the derivatives of the actual solution $\psi$ to \eqref{SQG}--\eqref{initialdata} yields %\begin{small}
\beg{align*}
&\mathcal{E}_{G}^b [\ell;\theta]^2
= \int_{0}^{T} \sum\limits_{|\alpha| \le \ell} \int_{-\pi}^{\pi}  \left(D^{\alpha} \widehat{\psi}(x_1,\pi) - D^{\alpha} \widehat{\psi}(x_1, -\pi) \right)^2 dx_1dt
\\&\quad\quad+ \int_{0}^{T}\sum\limits_{|\alpha| \le \ell} \int_{-\pi}^{\pi} \left(D^{\alpha} \widehat{\psi}(\pi,x_2) - D^{\alpha} \widehat{\psi}(-\pi, x_2) \right)^2 dx_2dt
\\&\le C\int_{0}^{T} \int_{-\pi}^{\pi} \sum\limits_{|\alpha| \le \ell} \left(\left(D^{\alpha} \widehat{\psi}(x_1,\pi) - D^{\alpha} \psi(x_1, \pi) \right)^2 + \left(D^{\alpha} \widehat{\psi}(x_1,-\pi) - D^{\alpha} \psi(x_1, -\pi) \right)^2\right) dx_1dt
\\&\quad\quad+C  \int_{0}^{T} \int_{-\pi}^{\pi} \sum\limits_{|\alpha| \le \ell} \left(\left(D^{\alpha} \widehat{\psi}(\pi,x_2) - D^{\alpha} \psi(\pi, x_2) \right)^2 +\left(D^{\alpha} \widehat{\psi}(-\pi,x_2) - D^{\alpha} \psi(-\pi, x_2) \right)^2 \right)dx_2dt
\\&\le C\sum\limits_{|\alpha| \le \ell} \|D^{\alpha} \widehat{\psi} - D^{\alpha} \psi\|_{L^2(\pa \Omega)}^2
\le C\sum\limits_{|\alpha| \le \ell} \|D^{\alpha} (\widehat{\psi} - \psi)\|_{H^1(\Omega)}^2
\le C\|\widehat{\psi} - \psi\|_{H^{\ell + 1}(\Omega)}^2,
\end{align*} 
%\end{small} 
and consequently, it holds that
\be \nonumber
\mathcal{E}_{G}^p [\ell;\theta]
\le C\frac{1 + \ln^{\ell + 1}N}{N^{-\ell +s -3}},
\ee for any nonnegative integer $\ell \le s-4$.

\smallskip

\noindent{\bf{Step 4. Estimation of $\mathcal{E}_{G}^p [\ell;\theta]$}.} Recall that we have fixed $\beta=3$ in $\mathcal{E}_{G}^{p}[\ell;\theta]$. By subtracting and adding $\psi$, we bound 
\be \nonumber
\mathcal{E}_{G}^p [\ell;\theta] ^2
\le C\int_{0}^{T} \|\widehat{\psi} - \psi\|_{H^{\ell +3}(\TT^2)}^2 dt
+ C\int_{0}^{T} \|\psi\|_{H^{\ell +3}(\TT^2)}^2dt.
\ee Thus, we obtain 
\be \nonumber
\mathcal{E}_{G}^p [\ell;\theta] \le C\frac{1+ \ln^{\ell +3}N}{N^{-\ell +s -5}} + C\|\psi\|_{C^{\ell +3}([0,T] \times \TT^2)},
\ee for any nonnegative integer $\ell \le s-6$. 

\smallskip

\noindent{\bf{Step 5. Estimation of $\mathcal{E}_{G}^{per}[\ell;\theta]$}.}  Using the periodicity of the exact solution $\psi$ to \eqref{SQG}--\eqref{initialdata}, we can estimate the error $\mathcal{E}_{G}^{per}[\ell; \theta]$ as 
\be \la{ccc}
\beg{aligned}
\mathcal{E}_{G}^{per}[\ell; \theta]^2
&\le  C\|\widehat{\psi} - \psi\|_{H^{\ell+1}([0,T] \times 6\TT^2)}^2 
\\&\quad+ \sum\limits_{|\beta| \le 1} \sum\limits_{|\alpha| \le \ell} \int_{2\TT^2} |D^{\beta} (D^{\alpha} \widehat{\psi})^2 (x+ 2 \pi i) - D^{\beta} (D^{\alpha} \widehat{\psi})^2(x) |^2 dx
\\&\quad\quad+\sum\limits_{|\beta| \le 1}  \sum\limits_{|\alpha| \le \ell} \int_{2\TT^2} |D^{\beta} (D^{\alpha} \widehat{\psi})^2 (x+ 2 \pi j) - D^{\beta} (D^{\alpha} \widehat{\psi})^2(x) |^2 dx.
\end{aligned}
\ee 
Due to the periodicity of $\psi$, we have $D^{\beta} (D^{\alpha} \psi)^2 (x+ 2\pi i) = D^{\beta} (D^{\alpha} \psi)^2 (x)$ and \\$D^{\beta} (D^{\alpha} \psi)^2 (x+ 2\pi j) = D^{\beta} (D^{\alpha}\psi)^2 (x)$. By subtracting and adding these terms, we can control the sums in \eqref{ccc} by 
\be \nonumber
\beg{aligned}
&C\sum\limits_{|\alpha| \le \ell} \|(D^{\alpha} \psi)^2  - (D^{\alpha} \widehat{\psi})^2\|_{H^1([0,T] \times 4\TT^2)}^2 
\\&= C\sum\limits_{|\alpha| \le \ell} \|(D^{\alpha} \psi  - D^{\alpha} \widehat{\psi}) (D^{\alpha} \psi  + D^{\alpha} \widehat{\psi})  \|_{H^2([0,T] \times 4\TT^2)}^2
\\&\le C\left(\|\psi\|_{H^{\ell + 2} ([0,T] \times 4\TT^2)}^2 + \|\widehat{\psi}\|_{H^{\ell + 2}([0,T] \times 4\TT^2)}^2 \right) \|\psi - \widehat{\psi}\|_{H^{\ell + 2} ([0,T] \times 4\TT^2)}^2
\\&\le C\left(\|\psi\|_{H^{\ell + 2} ([0,T] \times 4\TT^2)}^2 + \|\psi - \widehat{\psi}\|_{H^{\ell + 2}([0,T] \times 4\TT^2)}^2 \right) \|\psi - \widehat{\psi}\|_{H^{\ell + 2} ([0,T] \times 4\TT^2)}^2.
\end{aligned}
\ee 
Therefore,
\be \nonumber
\mathcal{E}_{G}^p [\ell;\theta] \le C\left(1 + \|\psi\|_{H^{\ell + 2} ([0,T] \times 4\TT^2)}^2 + \frac{1+ \ln^{\ell+2}N}{N^{-\ell +s -4}}\right)\frac{1+ \ln^{\ell+2}N}{N^{-\ell +s -4}},
\ee for any nonnegative integer $\ell \le s-5$.

\smallskip

\noindent{\bf{Step 6. Estimation of $\mathcal{E}_{G}[\ell;\theta]$}.} Combining the bounds derived in Steps 1--5, we deduce that 
\be 
\mathcal{E}_G[\ell;\theta] \le C\lambda^{\fr{1}{2}}\|\psi\|_{C^{\ell +3}([0,T] \times \TT^2)}  + C_{s,\psi,T,\ell} \frac{1+ \ln^{\ell +4}N}{N^{-\ell +s -6}}
+ C_{s,\psi,T,\ell} \left(\frac{1+ \ln^{\ell +4}N}{N^{-\ell +s -6}}\right)^2,
\ee
for any $\ell \in \left\{0, 1,\dots, s-7\right\}$. 
Choosing $\lambda$ sufficiently small and $N$ sufficiently large such that 
$$C\lambda^{\fr{1}{2}}\|\psi\|_{C^{\ell +3}([0,T] \times \TT^2)}   = \fr{\epsilon}{2} \mathrm{\;\;and\;\;} C_{s,\psi,T,\ell} \frac{1+ \ln^{\ell +4}N}{N^{-\ell +s -6}}
+ C_{s,\psi,T,\ell} \left(\frac{1+ \ln^{\ell +4}N}{N^{-\ell +s -6}}\right)^2
\le \fr{\epsilon}{2},$$ 
we infer that for these choices of $\lambda$ and $N$, the generalization error $\mathcal{E}_G[\ell;\theta] \le \epsilon.$
\end{proof}

We next provide estimates for the total error $\mathcal{E}$. However, we need first the following remark. 

\beg{rem} \la{remarkdissi} The property \ref{p3} is crucial for estimating the total error $\mathcal{E}$, as fractional integration by parts breaks down when applied to functions with non-periodic boundary conditions yielding undesired boundary terms.. Indeed, if $\psi$ is the exact periodic solution of the SQG problem and $\widehat{\psi}$ is a neural network approximating $\psi$, it follows from property \ref{p3} that 
\be 
(\psi - \widehat{\psi}, \tilde{\l} (\psi - \widehat{\psi}))_{L^2(\TT^2)} \ge \mathcal{A}(\psi - \widehat{\psi}), \nonumber
\ee where $\mathcal{A}(\psi - \widehat{\psi})$ obeys \eqref{secondbound}. 
 Due to the periodicity of the solution $\psi$, the first term on the right-hand side of \eqref{secondbound} can be bounded as follows, 
 \begin{equation*}
\begin{aligned} 
&\|\psi -\widehat{\psi}\|_{L^2(5\TT^2)}^2  
= \int_{5\TT^2} \|\psi(x) - \widehat{\psi}(x)|^2 dx
= \sum\limits_{k,m = -2}^{2} \int_{\TT^2 + 2k\pi i + 2m\pi j} |\psi(x) - \widehat{\psi}(x)|^2 dx\\
&= \sum\limits_{k,m=-2}^{2} \int_{\TT^2} |\psi (x - 2k\pi i - 2m\pi j) - \widehat{\psi}(x - 2k\pi i - 2m\pi j) |^2 dx  \\
&= \sum\limits_{k,m=-2}^{2} \int_{\TT^2} |\psi (x) - \widehat{\psi}(x - 2k\pi i - 2m\pi j) |^2 dx   \\
&\le C \int_{\TT^2} |\psi(x) - \widehat{\psi}(x)|^2 dx
+ C\sum\limits_{k,m=-2}^{2} \int_{\TT^2} |\widehat{\psi}(x) - \widehat{\psi}(x - 2k\pi i - 2m\pi j)|^2 dx. 
\end{aligned}
 \end{equation*}
The last term in the above estimate, together with the remaining terms in \eqref{secondbound} can be controlled by the $L^1(2\TT^2)$ norm of the periodic residual and its square root.  This will be exploited to show that the total error is fully dominated by the generalization error. 
\end{rem}

\beg{Thm} \la{Q2answer} Let $T>0$ be an arbitrary positive time and $s \ge 0$ be a nonnegative integer. Suppose $\psi_0 \in H^{s+3}(\TT^2)$. Let $\psi$ be the unique solution to the problem \eqref{SQG}--\eqref{initialdata}, and $\widehat{\psi}$ be \texttt{tanh} neural networks approximating $\psi$. Then there exists a positive constant $C_{T,0}$ depending only on $T$, $s$, and the initial size of $\psi_0$ such that the total error obeys 
\be \label{total3r}
\mathcal{E}[s;\theta]^2  
\le C_{T,0} \mathcal{E}_{G}[s;\theta]^2 \left(1+ \frac{1}{\sqrt{\lambda}} \right) e^{C_{T,0} + \frac{\mathcal{E}_G[s;\theta]}{\sqrt{\lambda}} },
\ee where $\lambda$ is the constant in \eqref{generaler}.
\end{Thm}

\begin{proof}
    The difference $\psi - \widehat{\psi}$ evolves according to 
    \be \nonumber
\pa_t (\psi - \widehat{\psi}) + \tilde{\l} (\psi - \widehat{\psi}) 
= - R^{\perp} \psi \cdot \na (\psi - \widehat{\psi})
- \tilde{R}^{\perp}(\psi - \widehat{\psi}) \cdot \na \widehat{\psi} 
- \mathcal{R}_{i}.
    \ee
We fix a vector $\alpha \in \NN^2$ such that $|\alpha| \le s$, apply the differential operator $D^{\alpha}$ to this latter equation, multiply by $D^{\alpha}(\psi -\widehat{\psi})$, integrate spatially over $\TT^2$, and sum over all such indices $\alpha$. We obtain the energy evolution 
\be \nonumber
\beg{aligned}
&\fr{1}{2} \fr{d}{dt} \|\psi - \widehat{\psi}\|_{H^s(\TT^2)}^2
+ \sum\limits_{|\alpha| \le s} (D^{\alpha} (\psi - \widehat{\psi}), \tilde{\l} D^{\alpha} (\psi - \widehat{\psi}))_{L^2(\TT^2)}
\\&\quad\quad= - \sum\limits_{|\alpha| \le s} (D^{\alpha} (R^{\perp} \psi \cdot \na (\psi - \widehat{\psi})) + D^{\alpha} (\tilde{R}^{\perp}(\psi - \widehat{\psi}) \cdot \na \widehat{\psi}), D^{\alpha}(\psi - \widehat{\psi}))_{L^2(\TT^2)} 
\\&\quad\quad\quad\quad
- \sum\limits_{|\alpha| \le s}(D^{\alpha} \mathcal{R}_{i}, D^{\alpha}(\psi - \widehat{\psi}))_{L^2(\TT^2)} .
\end{aligned}
\ee 
In view of Remark \ref{remarkdissi}, we bound the dissipation from below by $\mathcal{A}(D^{\alpha}(\psi - \widehat{\psi}))$, with 
\be \nonumber
\beg{aligned}
\sum\limits_{|\alpha| \le s}|\mathcal{A}(D^{\alpha}(\psi - \widehat{\psi})| 
&\le C\|\psi - \widehat{\psi}\|_{H^s(\TT^2)}^2 + C\left(1 + \|\psi\|_{W^{s+1,\infty}(\TT^2)}\right)\|\mathcal{R}_{per}[s]\|_{L^1(2\TT^2)}^{\fr{1}{2}}  \\&\quad\quad+C\|\mathcal{R}_{per}[s]\|_{L^1(2\TT^2)}.
\end{aligned}
\ee
Now we estimate the nonlinear terms. Since $R^{\perp} \psi$ is divergence-free, it holds that 
\be \nonumber
\beg{aligned}
&\left|\sum\limits_{|\alpha| \le s}(R^{\perp} \psi \cdot \na D^{\alpha} (\psi - \widehat{\psi}), D^{\alpha}(\psi - \widehat{\psi}))_{L^2(\TT^2)} \right|
\\&\quad\quad= \left|\sum\limits_{|\alpha| \le s} \int_{\pa \TT^2} R^{\perp} \psi  |D^{\alpha}(\psi - \widehat{\psi})|^2 \cdot n d\sigma(x) \right|
\\&\quad\quad\le C\|R^{\perp} \psi\|_{L^{\infty}(\pa \TT^2)} \sum\limits_{|\alpha| \le s}\left(\int_{-\pi}^{\pi} \left||D^{\alpha} \widehat{\psi} (x_1, \pi)|^2 - |D^{\alpha} \widehat{\psi}(x_1, -\pi)|^2 \right| dx_1 \right)
\\&\quad\quad\quad+ C\|R^{\perp} \psi\|_{L^{\infty}(\pa \TT^2)} \sum\limits_{|\alpha| \le s} \left(\int_{-\pi}^{\pi} \left||D^{\alpha} \widehat{\psi} (\pi, x_2)|^2 - |D^{\alpha} \widehat{\psi}(-\pi, x_2)|^2\right| dx_2 \right)
\\&\quad\quad\quad\quad+C\|R^{\perp} \psi\|_{L^{\infty}(\pa \TT^2)} \sum\limits_{|\alpha| \le s} \left(\int_{-\pi}^{\pi} |D^{\alpha} \psi (x_1, \pi)| |D^{\alpha} \widehat{\psi}(x_1, \pi) - D^{\alpha} \widehat{\psi}(x_1, - \pi) | \right) 
\\&\quad\quad\quad\quad\quad+C\|R^{\perp} \psi\|_{L^{\infty}(\pa \TT^2)} \sum\limits_{|\alpha| \le s} \left(\int_{-\pi}^{\pi} |D^{\alpha} \psi (\pi, x_2)| |D^{\alpha} \widehat{\psi}(\pi, x_2) - D^{\alpha} \widehat{\psi}(-\pi, x_2) | \right)
\\&\quad\quad\le C\|\psi\|_{H^3(\TT^2)}\left(\|\psi\|_{H^{s+3}(\TT^2)} + \|\widehat{\psi}\|_{H^{s+3}(\TT^2)} \right)\|\mathcal{R}_{b}[s]\|_{L^1(\TT^2)}^{\fr{1}{2}}, 
\end{aligned}
\ee where the last inequality follows from applications of the algebraic inequality $|a^2 - b^2| \le |a-b||a+b|$, the trace theorem, and continuous Sobolev embedding. Moreover, applying Leibniz's rule and H\"older's inequality, we deduce the commutator estimate
\be \nonumber
\|D^{\alpha} (R^{\perp} \psi \cdot \na (\psi - \widehat{\psi})) - R^{\perp} \psi \cdot \na D^{\alpha} (\psi - \widehat{\psi})\|_{L^2(\TT^2)}
\le C\|R^{\perp} \psi\|_{W^{s,\infty}(\TT^2)} \|\psi - \widehat{\psi}\|_{H^s(\TT^2)},
\ee which boils down to 
\be \nonumber
\|D^{\alpha} (R^{\perp} \psi \cdot \na (\psi - \widehat{\psi})) - R^{\perp} \psi \cdot \na D^{\alpha} (\psi - \widehat{\psi})\|_{L^2(\TT^2)}
\le C\|\psi\|_{H^{s+2}(\TT^2)} \|\psi - \widehat{\psi}\|_{H^s(\TT^2)},
\ee after making use of continuous Sobolev embedding and the boundedness of the periodic Riesz transform on Sobolev spaces. Consequently, we infer that 
\be \nonumber
\beg{aligned}
&\left|\sum\limits_{|\alpha| \le s} (D^{\alpha} (R^{\perp} \psi \cdot \na (\psi - \widehat{\psi})), D^{\alpha}(\psi - \widehat{\psi}))_{L^2(\TT^2)} \right|
%\\&\quad\quad= \left|\sum\limits_{|\alpha| \le s} (D^{\alpha} (R^{\perp} \psi \cdot \na (\psi - \widehat{\psi})) - R^{\perp} \psi \cdot \na D^{\alpha}(\psi - \widehat{\psi}), D^{\alpha}(\psi - \widehat{\psi}))_{L^2(\TT^2)}  \right|
\le C\|\psi\|_{H^{s+2}(\TT^2)} \|\psi - \widehat{\psi}\|_{H^s(\TT^2)}^2 \\&\quad\quad+ C\|\psi\|_{H^3(\TT^2)}\left(\|\psi\|_{H^{s+3}(\TT^2)} + \|\widehat{\psi}\|_{H^{s+3}(\TT^2)} \right)\|\mathcal{R}_{b}[s]\|_{L^1(\TT^2)}^{\fr{1}{2}} .
\end{aligned}
\ee 
As for the nonlinear term involving the non-periodic operator $\tilde{R}$, we have 
\be \nonumber
\beg{aligned}
&\left|\sum\limits_{|\alpha| \le s}(D^{\alpha} (\tilde{R}^{\perp}(\psi - \widehat{\psi}) \cdot \na \widehat{\psi}), D^{\alpha}(\psi - \widehat{\psi}))_{L^2(\TT^2)} \right|
\\&\quad\quad\le \sum\limits_{|\alpha| \le s} \|D^{\alpha}(\tilde{R}^{\perp} (\psi 
-\widehat{\psi}) \cdot \na \widehat{\psi})\|_{L^2(\TT^2)} \|D^{\alpha} (\psi - \widehat{\psi})\|_{L^2(\TT^2)}
\\&\quad\quad\le C\|\psi - \widehat{\psi}\|_{H^s(\TT^2)} \sum\limits_{|\alpha| \le s} \|\tilde{R}^{\perp} D^{\alpha}\ (\psi - \widehat{\psi}) \|_{L^2(\TT^2)} \|\na \widehat{\psi}\|_{W^{s, \infty}(\TT^2)}.
\end{aligned}
\ee In view of the property \ref{r3} satisfied by $\tilde{R}$, we bound  
\begin{equation*} 
\|\tilde{R}^{\perp} D^{\alpha}\ (\psi - \widehat{\psi}) \|_{L^2(\TT^2)}^2
\le C\|D^{\alpha} \psi -D^{\alpha} \widehat{\psi}\|_{L^2(2\TT^2)}^2
\le C\|\psi - \widehat{\psi}\|_{H^s(\TT^2)}^2 + C\|\mathcal{R}_{per}[s] \|_{L^1(2\TT^2)},
\end{equation*}
and so
\be \nonumber
\beg{aligned}
&\left|\sum\limits_{|\alpha| \le s}(D^{\alpha} (\tilde{R}^{\perp}(\psi - \widehat{\psi})) \cdot \na \widehat{\psi}, D^{\alpha}(\psi - \widehat{\psi}))_{L^2(\TT^2)} \right|
\le C\|\widehat{\psi}\|_{H^{s+3}(\TT^2)} \|\psi - \widehat{\psi}\|_{H^s(\TT^2)}^2
\\&\quad\quad\quad+ C\|\widehat{\psi}\|_{H^{s+3}(\TT^2)} \|\mathcal{R}_{per}[s]\|_{L^1(2\TT^2)}^{\fr{1}{2}}  \|\psi - \widehat{\psi}\|_{H^s(\TT^2)}.
\end{aligned}
\ee These estimates yield the differential inequality 
\beg{align*}
&\fr{d}{dt} \|\psi - \widehat{\psi}\|_{H^s(\TT^2)}^2 
\le C\left(\|\widehat{\psi}\|_{H^{s+3}(\TT^2)}^2 + \|\psi\|_{H^{s+2}(\TT^2)} 
 + 1\right) \|\psi - \widehat{\psi}\|_{H^{s}(\TT^2)}^2 
+ C\|\mathcal{R}_{i}\|_{H^s(\TT^2)}^2
  \\&\quad\quad+ C\|\mathcal{R}_{per}[s]\|_{L^1(2\TT^2)}
+ C\|\psi\|_{H^3(\TT^2)}\left(\|\psi\|_{H^{s+3}(\TT^2)} + \|\widehat{\psi}\|_{H^{s+3}(\TT^2)} \right)\|\mathcal{R}_{b}[s]\|_{L^1(\TT^2)}^{\fr{1}{2}} 
\\&\quad\quad\quad\quad+C\left(1 + \|\psi\|_{W^{s+1, 
 \infty}(\TT^2)} \right)  \|\mathcal{R}_{per}[s]\|_{L^1(2\TT^2)}^{\fr{1}{2}}.
\end{align*}
By Gronwall's inequality, we deduce that
\be \la{total2r}
\beg{aligned}
&\|\psi(t) - \widehat{\psi}(t)\|_{H^s(\TT^2)}^2
\le C_{T,0} \left(\mathcal{E}_{G}[s;\theta]^2 + %\|\psi(0) - \widehat{\psi}(0)\|_{H^s(\TT^2)}^2  + 
\left(\int_{0}^{t} \|\widehat{\psi}\|_{H^{s+3}(\TT^2)}^2 d\widetilde{t} \right)^{\fr{1}{2}} \mathcal{E}_{G}[s;\theta] \right)
\\&\hspace{5cm}\times e^{\int_{0}^{t} \left(\|\widehat{\psi}\|_{H^{s+3}(\TT^2)} + \|\psi\|_{H^{s+2}(\TT^2)}  +1 \right) d\widetilde{t}}
\end{aligned}
\ee holds for all $t \in [0,T]$. Here $C_{T,0}$ is a constant depending only on $T$ and the size of the initial data $\psi_0$. We bound
{\small\be \la{total1r}
\int_{0}^{t} \|\widehat{\psi}(s)\|_{H^{s+3}(\TT^2)}  ds
\le \sqrt{t} \left( \int_{0}^{t} \|\widehat{\psi}(s)\|_{H^{s+3}(\TT^2)}^2  ds\right)^{\fr{1}{2}}
\le \sqrt{t} \mathcal{E}_{G}^p [s;\theta] 
\le \frac{\sqrt{t} \mathcal{E}_G[s;\theta]}{\sqrt{\lambda}}.
\ee} 
Combining \eqref{total2r} and \eqref{total1r}, we obtain the desired estimate \eqref{total3r}.
\end{proof}

\subsection{Conclusion} We have provided positive answers to both \ref{Q1} and \ref{SubQ2}. These results demonstrate the feasibility of using feedforward neural networks with \texttt{tanh} activation functions to approximate the solution to the SGQ equation. In particular, \ref{Q1} ensures the existence of neural network approximations that nearly satisfy the PDE relation, while \ref{SubQ2} guarantees that these approximations are indeed close to the true solution.

\appendix 

\section{Regularity of Solutions to the SQG Equation} \la{AP}

In this appendix, we address the global smoothness of solutions to the SQG problem \eqref{SQG}--\eqref{initialdata}. We first recall the following theorem regarding the global well-posedness of \eqref{SQG}--\eqref{initialdata} for Sobolev $H^1(\TT^2)$ initial data:

\beg{Thm}[{\cite[Theorem 4.5]{constantin2015long}}] \label{h1regularity} 
Let $T>0$ be arbitrary. Let $\psi_0 \in H^1(\TT^2)$. There exists a unique solution $\psi$ to \eqref{SQG}--\eqref{initialdata} such that
$
\psi \in L^{\infty}(0,T; H^1(\TT^2)) \cap L^2(0,T; H^{\fr{3}{2}}(\TT^2)). $
\end{Thm}

We next present a higher-order space-time regularity of solutions to the 2D problem. 
%two-dimensional problem \eqref{SQG}--\eqref{initialdata}: 

\beg{Thm} \label{smoothnessSQG}
Let $T>0$ be arbitrary. Let $s > 1$ be an integer. Suppose $\psi_0 \in H^s(\TT^2).$ Then the unique solution $\psi$ to the problem \eqref{SQG}--\eqref{initialdata} satisfies the regularity criterion
\be \label{hsregularity}
\psi \in L^{\infty}(0,T; H^s(\TT^2)) \cap L^2(0,T; H^{s+\fr{1}{2}}(\TT^2)).
\ee Moreover, it holds that
\be \label{hsregularity2}
\psi \in C^{s-2} ([0,T] \times \TT^2).
\ee 
\end{Thm}

\begin{proof} For each $\epsilon > 0$, we consider a parabolic regularization of the SQG equation (diffused by the additional term $-\epsilon \Delta \psi$) with smoothed-out initial data. Each $\epsilon$-regularized system has a unique global smooth solution. Moreover, the family of resulting solutions converges to the unique solution of the initial value problem \eqref{SQG}--\eqref{initialdata}. The following calculations should be performed on the $\epsilon$-regularized systems and inherited to the solution of \eqref{SQG}--\eqref{initialdata} via use of the Banach Alaoglu theorem and the lower semi-continuity of the involved norms. We disregard this technicality for simplicity and directly address the energy evolutions of \eqref{SQG}--\eqref{initialdata}. We also denote the norms $\|\cdot\|_{L^p(\TT^2)}$ by $\|\cdot\|_{L^p}$ as the spatial domain is $\TT^2$ everywhere in the proof below. 

We take the scalar product in $L^2(\TT^2)$ of the SQG equation \eqref{SQG} with $\l^{2s} \psi$ and obtain the energy equation
\be \label{sevolution}
\fr{1}{2} \fr{d}{dt}\|\l^s \psi\|_{L^2}^2 + \|\l^{s + \fr{1}{2}}\psi\|_{L^2}^2 = -\int_{\TT^2} u \cdot \na \psi \l^{2s} \psi dx.
\ee
Integrating by parts the nonlinear term, and using the divergence-free condition obeyed by $u$ gives the relation
\be 
\int_{\TT^2} u \cdot \na \psi \l^{2s} \psi dx
= \int_{\TT^2} (\l^s (u \cdot \na \psi) - u \cdot \na \l^s \psi) \l^s \psi dx. \nonumber
\ee 
Due to the periodic commutator estimate (see \cite[Appendix~A]{constantin2015long})
\be 
\|\l^s (u \cdot \na \psi) - u \cdot \l^s \na \psi \|_{L^2} 
\le C\|\na u \|_{L^4} \|\l^{s-1} \na \psi\|_{L^4} + C\|\l^s u \|_{L^4} \|\na \psi\|_{L^4}, \nonumber
\ee the fact that $\l^s$ and $\na$ commutes (as these operators  are Fourier multipliers), the continuous embedding of $H^{\fr{1}{2}}(\TT^2)$ in $L^4(\TT^2)$, and the boundedness of the Riesz transform on the periodic Sobolev spaces, we have 
\be 
\beg{aligned}
\left|\int_{\TT^2} u \cdot \na \psi \l^{2s} \psi dx\right|
\le C\|\l^{\fr{3}{2}} \psi\|_{L^2} \|\l^{s+\fr{1}{2}} \psi\|_{L^2} \|\l^s \psi\|_{L^2}. \nonumber
\end{aligned}
\ee 
A straightforward application of Young's inequality for products and exploitation of the dissipative structure of \eqref{sevolution}  give rise to the differential inequality
\be 
\fr{d}{dt} \|\l^s \psi\|_{L^2}^2
+ \|\l^{s+\fr{1}{2}}\psi\|_{L^2}^2
\le C\|\l^{\fr{3}{2}} \psi\|_{L^2}^2 \|\l^s \psi\|_{L^2}^2. \nonumber
\ee 
By Theorem \ref{h1regularity} and Gronwall inequality, we deduce \eqref{hsregularity}. 

Due to the fractional product estimate
\be 
\|\l^{s} (u\psi)\|_{L^2}
\le C\|u\|_{L^{\infty}}\|\l^s \psi\|_{L^2} + C\|\l^s u\|_{L^2} \|\psi\|_{L^{\infty}}, \nonumber
\ee the continuous embedding of $H^{\fr{3}{2}}(\TT^2)$ in $L^{\infty}(\TT^2)$, the boundedness of the Riesz transform on $H^{\fr{3}{2}}(\TT^2)$ and $H^s(\TT^2)$, and the fact that $R^{\perp} \psi$ is divergence-free, we can estimate the nonlinear term $u \cdot \na \psi$ in the Sobolev norm of $H^{s -1}(\TT^2)$ and obtain 
\be \label{est:reg-1}
\|\l^{s-1} (u \cdot \na \psi)\|_{L^2}
= \|\l^{s-1} \na \cdot (u \psi)\|_{L^2}
\le C\|\l^s (u\psi)\|_{L^2}
\le C\|\l^s \psi\|_{L^2} \|\l^{\fr{3}{2}} \psi\|_{L^2}.
\ee Thus, it holds that
\be 
\|\l^{s-1}(\pa_t \psi)\|_{L^2}
\le \|\l^s \psi\|_{L^2} + C\|\l^s \psi\|_{L^2} \|\l^{\fr{3}{2}} \psi\|_{L^2}, \nonumber
\ee from which we infer that
$\pa_t \psi \in L^2(0,T; H^{s-1}(\TT^2))$. Now we apply the Aubin-Magenes lemma and deduce that $\psi \in C(0,T; H^s(\TT^2))$. 
As $s\geq 2$, it follows from \eqref{est:reg-1} that $u\cdot\nabla \psi \in C(0,T; H^{s-1}(\TT^2))$.
 Since the diffusion term $\l \psi$ belongs to $C(0,T; H^{s-1}(\TT^2))$, we infer that
$\pa_t \psi \in C(0,T; H^{s-1}(\TT^2))$, and so $ \psi \in C^1(0,T; H^{s-1}(\TT^2))$.
We bootstrap and obtain the regularity $\psi \in \bigcap_{k=0}^{s-2} C^{k}(0,T; H^{s-k}(\TT^2))$. As $H^{s-k}(\TT^2)$ is continuously embedded in $C^{s-k-2}(\TT^2)$, it follows that $\psi \in \bigcap_{k=0}^{s-2} C^{k}(0,T; C^{s-k-2}(\TT^2))$, yielding the desired smoothness property \eqref{hsregularity2}. 
\end{proof}

\section*{Acknowledgments}

R.H. was partially supported by a grant from the Simons Foundation (MP-TSM-00002783). Q.L. was partially supported by the AMS-Simons Travel Grant.

\vspace{0.5cm}

{\bf{Data Availability Statement.}} The research does not have any associated data.

\bibliographystyle{siamplain}
\bibliography{reference}

\end{document}